        \newtheorem{lemma}{Lemma}[section]
        \newtheorem{proposition}[lemma]{Proposition}
         \newtheorem{theorem}[lemma]{Theorem}
        \newtheorem{definition}{Definition}[section]
        \newtheorem{remark}[lemma]{Remark}
\numberwithin{equation}{section}
\title{\bf{Lipschitz stability for a piecewise linear Schr\"{o}dinger potential from local Cauchy data}}
\author{Giovanni Alessandrini\thanks{Dipartimento di Matematica e Geoscienze, Universit\`{a} di Trieste, Italy. Email:alessang@units.it}\qquad{Maarten V. de Hoop\thanks{Departments of Computational and Applied Mathematics, Earth Science, Rice University, Houston, Texas, USA. Email:mdehoop@rice.edu}}\qquad\\
Romina Gaburro\thanks{Department of Mathematics and Statistics, University of Limerick, Ireland.  Email: romina.gaburro@ul.ie}\qquad
Eva Sincich\thanks{Dipartimento di Matematica e Geoscienze, Universit\`{a} di Trieste, Italy. Email:esincich@units.it}}
\date{}
\begin{document}
\maketitle

\begin{abstract}
We consider the inverse boundary value problem of determining the potential $q$ in the equation $\Delta u + qu = 0$ in $\Omega\subset\mathbb{R}^n$, from local Cauchy data.  A result of global Lipschitz stability is obtained in dimension $n\geq 3$ for potentials that are piecewise linear on a given partition of $\Omega$. No sign, nor spectrum condition on $q$ is assumed, hence our treatment encompasses the reduced wave equation $\Delta u + k^2c^{-2}u=0$  at fixed frequency $k$.
\end{abstract}

\section{Introduction}\label{sec1}
The purpose of this paper is to achieve good stability estimates in the determination of the coefficient $q=\frac{1}{c^2}$ ($c$=wavespeed) in the Helmholtz type equation

\begin{equation}\label{Helmholtz eq}
\Delta u + k^2 qu = 0
\end{equation}

in a domain $\Omega$, when all possible Cauchy data $u\big\vert_{\Sigma}, \frac{\partial u}{\partial\nu}\big\vert_{\Sigma}$ are known on an open portion $\Sigma$ of $\partial\Omega$, at a single given frequency $k$. In view of the well-known exponential ill-posedness of this problem \cite{Ma} we shall introduce the rather strong \textit{a-priori} assumption on the unknown coefficient $q$ of being piecewise linear. The precise formulation will be given later on in section \ref{sec2}.
Uniqueness of the inverse boundary value problem associated with the Helmholtz equation in dimension $n\geq 3$ was established by Sylvester and Uhlmann \cite{Sy-U-1} assuming that the wavespeed is a bounded measurable function.

The inverse boundary value problem associated with the Helmholtz equation has
been extensively studied from an optimization point of view primarily
using computational experiments. In reflection seismology, iterative
methods for this inverse boundary value problem have been collectively
referred to as full waveform inversion (FWI). (The term `full waveform
inversion' was supposedly introduced by Pan, Phinney and Odom
in \cite{Pan1988} with reference to the use of full seismograms
information). Lailly \cite{Lailly1983} and
Tarantola \cite{Tarantola1984, Tarantola1987} introduced the
formulation of the seismic inverse problem as a local optimization
problem using a misfit functional. The misfit functional was
originally based on a least-squares criterion, but it needs to be
carefully designed to fit the analysis of the inverse boundary value
problem on the one hand - possibly, using Hilbert-Schmidt operators as
the data - and match actual data acquisition on the other
hand. Furthermore, we mention the original work of Bamberger,
Chavent \& Lailly \cite{Bamberger1977, Bamberger1979} in the
one-dimensional case. Initial computational experiments in the
two-dimensional case were carried out by Gauthier \cite{Gauthier1986}.


The time-harmonic formulation was initially promoted by Pratt and his
collaborators \cite{Pratt1990, Pratt1991}; he also emphasized the
importance of available wide-angle reflection data
in \cite{Pratt1996}. In this line of research, we mention the more
recent work of Ben-Hadj-Ali, Operto and
Virieux \cite{Ben-Hadj-Ali2008}.

A key question, namely, that of convergence of such iterative schemes
in concert with a well-understood regularization, was left open for
almost three decades. It appears that Lipschitz stability estimates
and conditional Lipschitz stability estimates provide a platform for
convergence analyses of the Landweber iteration \cite{dH-Q-S-1} and
projected steepest descent method \cite{dH-Q-S-2}, respectively, with
natural extensions to Newton-type methods.

As mentioned above, the exponential character of instability of the inverse boundary value
problem associated with the Helmholtz equation cannot be avoided. However, conditional Lipschitz stability
estimates can be obtained: Including discontinuities in the
coefficient, Beretta, De Hoop and Qiu \cite{Beretta2012} showed that
such an estimate holds if the unknown coefficient is a piecewise constant function with a known underlying
domain partition. Beretta, De Hoop, Qiu and Scherzer \cite{Beretta2016} also give a quantitative estimate for the
stability constant revealing the precise exponential growth with the
number of subdomains in the partition. We generalize the conditional
Lipschitz stability estimate to piecewise linear functions. Moreover, we use Cauchy data rather than
the Dirichlet-to-Neumann map. If we view to \eqref{Helmholtz eq} as the reduced wave equation, the coefficient $q(x)$ equals $\frac{1}{c^2(x)}$, where $c$ is the variable speed of propagation and thus $q>0$. This implies that $0$ might be a Dirichlet (or Neumann) eigenvalue, and even if not, it might be close to an eigenvalue. Therefore it is not convenient for the purpose of stability estimates, to express the boundary data in terms of the well-known D-N map (or the N-D one). We have chosen to express errors on the boundary data in terms of the so-called angle (or distance) between the spaces of Cauchy data viewed as subspaces of a suitable Hilbert space (see below section \ref{sec2}).

Moreover, since the present estimates are obtained for measurements at one fixed frequency $k>0$ we convene from now on to set $k=1$ and we shall also admit that $q$ may be real valued but of variable sign, thus our analysis encompasses more generally the stationary Schr\"odinger equation

\[\Delta u+qu=0.\]

Let us emphasize also that Cauchy data are a proxy to data obtained from advanced marine
acquisition systems. Here, airgun arrays excite waves underneath the
sea surface that is accounted for by a Dirichlet boundary condition,
which are detected in possibly variable depth towed dual sensor
streamers positioned (on some hypersurface) below the airgun
arrays. Dual sensors provide both the pressure and the normal particle
velocity forming Cauchy data. To allow favorable paths of (parallel)
streamers, in so-called full-azimuth acquisition, one uses two
recording vessels with their own sources and two separate source
vessels~\footnote{See, for example, the dual coil shooting
full-azimuth acquisition by WesternGeco.}

One can exploit conditional Lipschitz stability estimates, via a
Fourier transform, in the corresponding time-domain inverse boundary
value problem with bounded frequency data. Datchev and De Hoop
\cite{Datchev2015} showed how, via resolvent estimates for the
Helmholtz equation, the prerequisites for application of projected
steepest descent and Newton-type iterative reconstruction methods to
inverse wave problems can be satisfied.



With the objective of obtaining approximate reconstructions in the
class of models for which conditional Lipschitz stability estimates
hold, from a starting model -- the error of which can be estimated as
well in this context \cite{dH-Q-S-1} -- compression plays an
important role. This is elucidated in the work of De Hoop, Qiu and
Scherzer \cite{dH-Q-S-2} using multi-level schemes based on successive
refinement and arises in mitigating the growth of the stability
constants with the number of subdomains in the partition or, simply,
the number parameters. Indeed, the class of piecewise linear functions
provide an excellent way to achieve compression in the presence of
discontinuities. The application of wavelet bases in compressing the
successive models in iterative methods has been considered
by \cite{Loris2007, Loris2010} in wave-equation tomography in a the
framework of sparsity promoting optimization and in FWI by Lin,
Abubakar \& Habashy \cite{Lin2012} for the purpose of reducing the
size of the Jacobian.

In FWI one commonly applies a `nonlinear' conjugate gradient method, a
Gauss-Newton method, or a quasi-Newton method (L-BFGS; for a review,
see Brossier \cite{Brossier2009}). For the application of multi-scale
Newton methods, see Akcelik \cite{Akcelik2002}. In the gradient
method, the step length is typically estimated by a simple line search
for which a linearization of the direct problem is used (Gauthier,
Virieux \& Tarantola \cite{Gauthier1986}). This estimation is
challenging in practice and may lead to a failure of convergence. For
this purpose, research based on trust region has been studied by
Eisenstat \& Walker in \cite{Eisenstat1994}, for FWI see M\'etivier
and others in \cite{M'etivier}, the method is detailed
in \cite{Conn2000}. In various approaches based on the Gauss-Newton
scheme one accounts just for the diagonal of the
Hessian \cite{Shin2001}. In certain earthquake seismology
applications, one builds the Fr\'{e}chet derivative or Jacobian
(sensitivity, for example by Chen \textit{et al.} \cite{Chen2007})
explicitly and then applies LSQR. We also mention the work of
M\'etivier and others in \cite{M'etivier} based on Hessian
vector multiplication techniques to reduce the cost of a dense Hessian
computation. The use of complex frequencies was studied
in \cite{Shin2009, Ha2010}.

Coming back to the object of the present paper, let us point out that various new aspects appear in comparison to prior results of stability under assumptions of piecewise constant or piecewise linear coefficients \cite{A-V, A-dH-G-S, Beretta2012} which require novel arguments.

\begin{enumerate}[I)]

\item\underline{Singular solutions.} Since we admit that the underlying equation may be in the eigenvalue regime for the Dirichlet or the Neumann boundary value problem we need to construct Green's functions for a boundary of mixed type which is of Dirichlet type on part of the boundary and of complex valued Robin-type on the remaining part. This construction relies on quantitative estimates of unique continuation which take inspiration from an idea of Bamberger and Hua Duong \cite{Ba-Du} and an iterative procedure for the approximation with the standard fundamental solution of Laplace's equation.

\item\underline{Asymptotics.} In order to determine values of the potential $q$ and its gradient we need singular solutions whose blow up rate is of the order up to $|x|^{-n}$, for this purpose we have to estimate the asymptotic behaviour of the previously found Green's function up to its second derivatives.

\item\underline{Stability at the boundary.} The typical initial step in stability from inverse boundary value problems is the stability at the boundary of the unknown coefficient. For Calder\'on's problem it is well-known that the stability at the boundary for the conductivity coefficient is of Lipschitz type \cite{Sy-U-2, A1}. For the potential coefficient $q$ the situation is different. In fact we are able in general to obtain only H\"older type stability. This fact is related to the different dimensionality of the boundary energy $\int_{\partial\Omega} u\:\frac{\partial u}{\partial\nu}$ and of the volume integral $\int_{\Omega} qu^2$ whereas in the conductivity case ($\mbox{div}(\gamma\nabla u) =0$) the equality $\int_{\partial\Omega} u\gamma\:\frac{\partial u}{\partial\nu}=\int_{\Omega} \gamma |\nabla u|^2$ provides the right balance.

Nevertheless, under the piecewise linear assumption, using the kind of bootstrap argument introduced in \cite{A-V} we eventually achieve the desired global Lipschitz stability.

\end{enumerate}

The outline of the paper is as follows. In section \ref{sec2} we provide the basic set up. We introduce the spaces of local Cauchy data and their metric structure as subspaces of a Hilbert space in subsection \ref{subsec notation and definitions}. Next, in subsection \ref{subsection assumptions} we present the \textit{a-priori} assumptions on the domain and on the potential and we state our main stability result theorem \ref{teorema principale}. In subsection \ref{PLP} we state the main propositions which provide the main tools for the proof of theorem \ref{teorema principale} which is completed in subsection \ref{LSPLP}. Section \ref{PP} contains the proofs of the various propositions previously stated. Subsection \ref{AE} contains the construction of the Green's function for the mixed Dirichlet-Robin boundary value problem and the estimates of its asymptotic behavior. Subsection \ref{PS} contains the quantitative estimates of unique continuation adapted for the singular solution $\tilde{S}_{\mathcal{U}_k}$ introduced in Section \ref{PMR}. We conclude in subsection \ref{stability at the boundary} with the stability estimates at the boundary for the potential $q$ and its normal derivative.


\section{Main Result}\label{sec2}
\setcounter{equation}{0}
\subsection{Definitions and preliminaries}\label{subsec notation and definitions}

In several places within this manuscript it will be useful to single out one coordinate
direction. To this purpose, the following notations for
points $x\in \mathbb{R}^n$ will be adopted. For $n\geq 3$,
a point $x\in \mathbb{R}^n$ will be denoted by
$x=(x',x_n)$, where $x'\in\mathbb{R}^{n-1}$ and $x_n\in\mathbb{R}$.
Moreover, given a point $x\in \mathbb{R}^n$,
we shall denote with $B_r(x), B_r'(x)$ the open balls in
$\mathbb{R}^{n},\mathbb{R}^{n-1}$ respectively centred at $x$ with radius $r$
and by $Q_r(x)$ the cylinder

\[Q_r(x)=B_r'(x')\times(x_n-r,x_n+r).\]



In the sequel, we shall make a repeated use of quantitative
notions of smoothness for the boundaries of various domains. Let
us introduce the following notation and definitions.

\begin{definition}\label{def Lipschitz boundary}
Let $\Omega$ be a domain in $\mathbb R^n$. We say that a portion
$\Sigma$ of $\partial\Omega$ is of Lipschitz class with constants
$r_0,L$ if there exists $P\in\Sigma$ and there exists a rigid
transformation of $\mathbb R^n$ under which we have $P=0$ and
$$\Omega\cap Q_{r_0}=\{x\in Q_{r_0}\,|\,x_n>\varphi(x')\},$$
$$\Sigma=\{x\in Q_{r_0}\,|\,x_n=\varphi(x')\},$$
where $\varphi$ is a Lipschitz function on $B'_{r_0}$ satisfying

\[\varphi(0)=|\nabla_{x'}\varphi(0)|=0;\qquad
\|\varphi\|_{C^{0,1}(B'_{r_0})}\leq Lr_0.\]

It is understood that $\partial\Omega$ is of Lipschitz class with
constants $r_0,L$ if it is finite union of portions of Lipschitz class with constants $r_0,L$.
\end{definition}

\begin{definition}\label{flat portion}
Let $\Omega$ be a domain in $\mathbb R^n$. We say that a portion $\Sigma$ of
$\partial\Omega$ is a flat portion of size $r_0$
if there exists $P\in\Sigma$ and there exists a rigid transformation of
$\mathbb R^n$ under which we have $P=0$ and



\begin{eqnarray}\label{flat}
\Sigma\cap{Q}_{r_{0}/3} &=&\{x\in
Q_{r_0/3}\,|\,x_n=0\}\nonumber\\
\Omega\cap {Q}_{r_{0}/3} &=&\{x\in
Q_{r_0/3}\,|\,x_n>0\}\nonumber\\
\left(\mathbb{R}^{n}\setminus\Omega\right)\cap {Q}_{r_{0}/3} &=&\{x\in
Q_{r_0/3}\,|\,x_n<0\},
\end{eqnarray}
\end{definition}


Let us define the space of \emph{local Cauchy data} on $\Sigma$ for $H^{1}(\Omega)$ solutions to

\begin{equation}\label{Schrodinger eq 2}
\Delta u +qu=0\qquad\textnormal{in}\quad\Omega ,
\end{equation}

having zero trace on $\partial\Omega\setminus\overline\Sigma$.


\begin{definition}\label{H1/2 00 and dual}
Let $\Omega$ be a domain in $\mathbb{R}^n$ with Lipschitz boundary $\partial\Omega$ and $\Sigma$ a non-empty open portion of $\partial\Omega$. Let us introduce the subspace of $H^{\frac{1}{2}}(\partial\Omega)$

\begin{equation}\label{Hco}
H^{\frac{1}{2}}_{co}(\Sigma)=\big\{f\in
H^{\frac{1}{2}}(\partial\Omega) \:\vert\:\textnormal{supp}
\:f\subset\Sigma\big\}.
\end{equation}

We recall that its closure with respect to the $H^{\frac{1}{2}}(\partial\Omega)$ - norm is the space $H^{\frac{1}{2}}_{00}(\Sigma)$ (see \cite{Lio-M}, \cite{Tartar}).



We define the \emph{Cauchy data} associated to $q$ with first component vanishing on $\partial\Omega\setminus\overline\Sigma$ to be the space $\mathcal{C}^{\Sigma}_q (\partial\Omega)$ defined by

\begin{eqnarray}\label{full Cauchy data with zero trace condition}
\mathcal{C}^{\Sigma}_q (\partial\Omega)=\Big\{(f,g)\in H^{\frac{1}{2}}_{00}(\Sigma)\times H^{-\frac{1}{2}}(\partial\Omega) & \big| &\:\exists u\in H^{1}(\Omega)\:\textnormal{weak\:solution\:to}\nonumber\\
& & \Delta u+qu=0\:\textnormal{in}\:\Omega,\nonumber\\
& & u\Big|_{\partial\Omega}=f,\: \partial_{\nu} u \Big|_{\partial\Omega}=g\Big\}.
\end{eqnarray}

\end{definition}

Analogously, we consider the subspace of $H^{\frac{1}{2}}(\partial\Omega)$,

\[H^{\frac{1}{2}}_{00}(\partial\Omega\setminus\overline\Sigma)\]

and the closed subspace of $H^{-\frac{1}{2}}(\partial\Omega)$ of functionals vanishing on $H^{\frac{1}{2}}_{00}(\Sigma)$ functions

\begin{equation}\label{Hco outside Sigma}
H^{-\frac{1}{2}}_{00}(\partial\Omega\setminus\overline\Sigma)=\left\{\psi\in H^{-\frac{1}{2}}(\partial\Omega)\:|\:\langle \psi, \varphi\rangle =0,\quad\textnormal{for\:any}\:\varphi\in H^{\frac{1}{2}}_{00}(\Sigma)\right\}.
\end{equation}

Here $\langle\psi,\varphi\rangle$ denotes the duality between the complex valued spaces $H^{-\frac{1}{2}}(\partial\Omega)$, $H^{\frac{1}{2}}(\partial\Omega)$  based on the $L^2$ inner product

\[\langle\psi,\varphi\rangle=\int_{\partial\Omega} \psi\overline\varphi.\]




We denote by $H^{\frac{1}{2}}(\partial\Omega)\big\vert_{\Sigma}$ and $H^{-\frac{1}{2}}(\partial\Omega)\big\vert_{\Sigma}$ the \emph{restrictions} of $H^{\frac{1}{2}}(\partial\Omega)$ and $H^{-\frac{1}{2}}(\partial\Omega)$ to $\Sigma$ respectively. Note that $H^{\frac{1}{2}}(\partial\Omega)\big\vert_{\Sigma}$ can be interpreted as the quotient space of $H^{\frac{1}{2}}(\partial\Omega)$ through the equivalence relation

\[\varphi\sim\psi\qquad\textnormal{iff}\qquad \varphi -\psi\in H^{\frac{1}{2}}_{00}(\partial\Omega\setminus\overline\Sigma).\]

The same reasoning applies to $H^{-\frac{1}{2}}(\partial\Omega)\big\vert_{\Sigma}$, that is

\[H^{-\frac{1}{2}}(\partial\Omega)\big\vert_{\Sigma}=H^{-\frac{1}{2}}(\partial\Omega)\Big/H^{-\frac{1}{2}}_{00}(\partial\Omega\setminus\overline\Sigma).\]


We can now define the local Cauchy data that will be considered here.

\begin{definition}\label{restrictions Cauchy data with zero trace condition}
The \emph{local Cauchy data} associated to $q$ \emph{having zero first component} on $\partial\Omega\setminus\overline\Sigma$ are defined by


\begin{eqnarray}\label{local Cauchy data with zero trace condition 2}
\mathcal{C}^{\Sigma}_q (\Sigma)=\Big\{(f,g)\in H^{\frac{1}{2}}_{00}(\Sigma)\times H^{-\frac{1}{2}}(\partial\Omega)\big\vert_{\Sigma} & \big| &\exists u\in H^{1}(\Omega)\:\textnormal{weak\:solution\:to}\nonumber\\
& & \Delta u+qu=0\:\textnormal{in}\:\Omega,\nonumber\\
& & u\Big|_{\partial\Omega}=f,\nonumber\\
& &\langle\partial_{\nu} u \Big|_{\partial\Omega}, \varphi\rangle=\langle g,\varphi\rangle,\quad\forall \varphi\in H^{\frac{1}{2}}_{00}(\Sigma)\nonumber\Big\}.
\end{eqnarray}







For the sake of completeness, let us also introduce the general local Cauchy data $\mathcal{C}_q(\Sigma)$ with no zero Dirichlet condition on $\partial\Omega\setminus\overline\Sigma$


\begin{eqnarray}\label{local Cauchy data with zero trace condition 3}
\mathcal{C}_q (\Sigma)=\Big\{(f,g)\in H^{\frac{1}{2}}(\partial\Omega)\big\vert_{\Sigma}\times H^{-\frac{1}{2}}(\partial\Omega)\big\vert_{\Sigma}  & \big|  &\exists u\in H^{1}(\Omega)\:\textnormal{weak\:solution\:to}\nonumber\\
& & \Delta u+qu=0\:\textnormal{in}\:\Omega,\nonumber\\
& & u\Big|_{\partial\Omega}-f\in H^{\frac{1}{2}}_{00}(\partial\Omega\setminus\overline\Sigma),\nonumber\\
& &\langle\partial_{\nu} u \Big|_{\partial\Omega}, \varphi\rangle=\langle g,\varphi\rangle,\quad\forall \varphi\in H^{\frac{1}{2}}_{00}(\Sigma)\nonumber\Big\}.
\end{eqnarray}
\end{definition}

Observe also that $H^{\frac{1}{2}}_{00}(\Sigma)\times H^{-\frac{1}{2}}(\partial\Omega)\big\vert_{\Sigma} $ is a Hilbert space with the norm

\begin{equation}\label{norm Cauchy data}
||(f,g)||_{H^{\frac{1}{2}}_{00}(\Sigma)\oplus  H^{-\frac{1}{2}}(\partial\Omega)\big\vert_{\Sigma}}=\left(||f||^{2}_{H^{\frac{1}{2}}_{00}(\Sigma)}+||g||^{2}_{H^{-\frac{1}{2}}(\partial\Omega)\big\vert_{\Sigma}}\right)^{\frac{1}{2}}.
\end{equation}

We recall that given closed subspaces $S_1, S_2$ of a Hilbert space $(H, ||\cdot||)$, the \emph{distance} (\emph{aperture}) between $S_1,S_2$ is defined as

\begin{equation}\label{Hausdorff distance}
d(S_1,S_2) =\max\left\{\sup_{h\in S_2, h\neq 0}\inf_{k\in S_1}\frac{||h-k||_{H}}{||h||_{H}}, \sup_{k\in S_1, k\neq 0}\inf_{h\in S_2}\frac{||h-k||_{H}}{||k||_{H}}\right\},
\end{equation}

see for instance \cite{Ak-Gl}. From now on, given two potential $q_i$, $i=1,2$, we will simply denote the local Cauchy data $\mathcal{C}^{\Sigma}_{q_i} (\Sigma)$ with $\mathcal{C}_i$, $i=1,2$. We recall also that it is known that when $d(S_1,S_2)<1$, then the two quantities within the maximum in \eqref{Hausdorff distance} coincide (see \cite{Kn-J-Ar}). Then, since we are interested in Cauchy data spaces $\mathcal{C}_{1}, \mathcal{C}_{2}$ corresponding to potentials $q_1, q_2$ respectively, when $\mathcal{C}_{1}$ and $\mathcal{C}_{2}$ are close to each other, it is sensible to set

\begin{equation}
d(\mathcal{C}_{1},\mathcal{C}_{2}) =\sup_{(f_2,g_2)\in\mathcal{C}_{2}}\inf_{(f_1,g_1)\in\mathcal{C}_{1}}\frac{||(f_1,g_1)-(f_2,g_2)||_{H^{\frac{1}{2}}_{00}(\Sigma)\oplus  H^{-\frac{1}{2}}(\partial\Omega)\big\vert_{\Sigma}}}{||(f_2,g_2)||_{H^{\frac{1}{2}}_{00}(\Sigma)\oplus  H^{-\frac{1}{2}}(\partial\Omega)\big\vert_{\Sigma}}}.
\end{equation}

Note also that properly speaking the above quantity is a distance between the closures $\overline{\mathcal{C}_{1}}$ and $\overline{\mathcal{C}_{2}}$. We notice that it could be proved that the subspaces $\mathcal{C}_{1}$ and $\mathcal{C}_{2}$ are indeed closed, but this fact is not much relevant in the present context.\\

Let us also recall some more or less well-known calculations. Let $u_i \in H^{1}(\Omega)$ be solutions to \eqref{Schrodinger eq 2} when $q=q_i$, $i=1,2$ respectively and such that $u_i\Big|_{\partial\Omega}\in H^{\frac{1}{2}}_{co}(\Sigma)$. Green's identity yields

\begin{equation}\label{Green's identity}
\int_{\Omega} (q_1-q_2) u_1 u_2 =\langle\partial_{\nu} \overline{u_2}, u_1\rangle - \langle \partial_{\nu} u_1, \overline{u_2}\rangle.
\end{equation}

Notice that a complex valued function $u_i$ is a solution to \eqref{Schrodinger eq 2} when $q=q_i$ (real valued) if and only if so is $\overline{u_i}$. Notice also that, if $v_i$ is any other such solution with $q=q_i$, we have

\begin{equation}
\langle \partial_{\nu} v_i,\overline{ u_i}\rangle - \langle \partial_{\nu} \overline{u_i}, v_i\rangle =0,\qquad\textnormal{for\:every}\quad i=1,2.
\end{equation}

Hence, for any $v_2$ solving $\Delta v_2+q_2v_2=0$ in $\Omega$,

\begin{equation}\label{Green's identity 2}
\int_{\Omega} (q_1-q_2) u_1 u_2 =\langle \partial_{\nu} \overline{u_2}, (u_1-v_2)\rangle - \langle \partial_{\nu} u_1 - \partial_{\nu} v_2, \overline{u_2}\rangle,
\end{equation}

from such an identity one easily deduces

\begin{eqnarray}\label{Alessandrini stability}
\left|\int_{\Omega} (q_1 -q_2)u_1u_2\:dx\right| &\leq & d(\mathcal{C}_{1},\mathcal{C}_{2})\left|\left|\left(u_1,\partial_{\nu} u_1\right)\right|\right|_{H^{\frac{1}{2}}_{00}(\Sigma)\oplus  H^{-\frac{1}{2}}(\partial\Omega)\big\vert_{\Sigma}}\\
& \times &\left|\left|\left({u_2},\partial_{\nu} \overline{u_2}\right)\right|\right|_{H^{\frac{1}{2}}_{00}(\Sigma)\oplus  H^{-\frac{1}{2}}(\partial\Omega)\big\vert_{\Sigma}}.\nonumber\\
\end{eqnarray}

\subsection{Conditional Lipschitz stability}\label{subsection assumptions}

\subsubsection{Assumptions about the domain $\Omega$}\label{subsec assumption domain}

\begin{enumerate}

\item We assume that $\Omega$ is a domain in $\mathbb{R}^n$
and that there is a positive constant $B$ such that

\begin{equation}\label{assumption Omega}
|\Omega|\leq B r_0 ^n,
\end{equation}

where $|\Omega|$ denotes the Lebesgue measure of $\Omega$.


\item We fix an open non-empty subset $\Sigma$ of $\partial\Omega$
(where the measurements in terms of the local Cauchy data are taken).

\item \[\bar\Omega = \bigcup_{j=1}^{N}\bar{D}_j,\]

where $D_j$, $j=1,\dots , N$ are known open sets of
$\mathbb{R}^n$, satisfying the conditions below.

\begin{enumerate}
\item $D_j$, $j=1,\dots , N$ are connected and pairwise
nonoverlapping polyhedrons.

\item $\partial{D}_j$, $j=1,\dots , N$ are of Lipschitz class with
constants $r_0$, $L$.

\item There exists one region, say $D_1$, such that
$\partial{D}_1\cap\Sigma$ contains a \emph{flat} portion
$\Sigma_1$ of size $r_0$ and for every $i\in\{2,\dots , N\}$ there exists $j_1,\dots ,
j_K\in\{1,\dots , N\}$ such that

\begin{equation}\label{catena dominii}
D_{j_1}=D_1,\qquad D_{j_K}=D_i.
\end{equation}

In addition we assume that, for every $k=1,\dots , K$,
$\partial{D}_{j_k}\cap \partial{D}_{j_{k-1}}$ contains a
\emph{flat} portion $\Sigma_k$ of size $r_0$ (here we agree that
$D_{j_0}=\mathbb{R}^n\setminus\Omega$), such that


\[\Sigma_k\subset\Omega,\quad\mbox{for\:every}\:k=2,\dots , K.\]

Let us emphasise that under such an assumption, for every $k=1,\dots , K$, there exists $P_k\in\Sigma_k$ and
a rigid transformation of coordinates (depending on $k$) under which we have $P_k=0$
and

\begin{eqnarray}
\Sigma_k\cap{Q}_{r_{0}/3} &=&\{x\in
Q_{r_0/3}|x_n=0\},\nonumber\\
D_{j_k}\cap {Q}_{r_{0}/3} &=&\{x\in
Q_{r_0/3}|x_n>0\},\nonumber\\
D_{j_{k-1}}\cap {Q}_{r_{0}/3} &=&\{x\in
Q_{r_0/3}|x_n<0\}.
\end{eqnarray}

\end{enumerate}
\end{enumerate}

\subsubsection{A-priori information on the potential $q$}

We shall consider a real valued function $q\in L^{\infty}(\Omega)$, with

\begin{equation}\label{apriori q}
||q||_{L^{\infty}(\Omega)}\leq E_0,
\end{equation}

for some positive constant $E_0$ and of type

\begin{subequations}
\begin{eqnarray}\label{a priori info su q}
& &q(x)=\sum_{j=1}^{N}q_{j}(x)\chi_{D_j}(x),\qquad
x\in\Omega,\label{potential 1}\\
& & q_{j}(x)=a_j+A_j\cdot x\label{potential 2},
\end{eqnarray}
\end{subequations}

where $a_j\in\mathbb{R}$, $A_j\in\mathbb{R}^n$ and $D_j$, $j=1,\dots ,
N$ are the given subdomains introduced in section \ref{subsec assumption
domain}.


\begin{definition}
Let $B$, $N$, $r_0$, $L$, $E_0$ be given
positive numbers with $N\in\mathbb{N}$. We will
refer to this set of numbers, along with the space dimension $n$,
as to the \textit{a-priori data}. Several constants depending on the \textit{a-priori data} will appear within the paper. In order to simplify our notation, any quantity denoted by $C,C_1,C_2, \dots$ will be called a \emph{constant} understanding in most cases that it only depends on the a priori data.
\end{definition}

\begin{remark}\label{remark finite dimensional space}
Observe that the class of functions of the form \eqref{potential 1} - \eqref{potential 2} is a finite dimensional linear space. The $L^{\infty}$ - norm $||q||_{L^{\infty}(\Omega)}$ is equivalent to the norm

\[||| q|||=\textnormal{max}_{j=1,\dots , N}\left\{|a_j|+|A_j|\right\}\]

modulo constants which only depend on the a-priori data.
\end{remark}



\begin{theorem}\label{teorema principale}
Let $\Omega$, $D_j$, $j=1,\dots , N$ and $\Sigma$ be a domain, $N$ subdomains of $\Omega$ and a portion of $\partial\Omega$ as in section \ref{subsec assumption domain} respectively.
Let $q^{(i)}$, $i=1,2$ be two potentials satisfying \eqref{apriori q} and of type

\begin{equation}\label{a priori info su sigma}
q^{(i)}=\sum_{j=1}^{N}q^{(i)}_{j}(x)\chi_{D_j}(x),\qquad
x\in\Omega,
\end{equation}

where

\[q^{(i)}_{j}(x)=a^{(i)}_j+A^{(i)}_j\cdot x,\]

with $a^{(i)}_j\in\mathbb{R}$ and $A^{(i)}_j\in\mathbb{R}^n$, then we have

\begin{equation}\label{stabilita' globale}
||q^{(1)}-q^{(2)}||_{L^{\infty}(\Omega)}\leq C d(\mathcal{C}_1,\mathcal{C}_2),
\end{equation}

where $\mathcal{C}_i$ denotes the space of Cauchy data $\mathcal{C}^{\Sigma}_{q_i}(\Sigma)$, for $i=1,2$ and $C$ is a positive constant that depends on the a-priori data
only.

\end{theorem}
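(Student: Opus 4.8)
The plan is to reduce the theorem to a bound on the finitely many coefficients and then to extract these coefficients one subdomain at a time by testing the Alessandrini-type identity \eqref{Alessandrini stability} against suitable singular solutions. Writing $q=q^{(1)}-q^{(2)}=\sum_j (a_j+A_j\cdot x)\chi_{D_j}$ with $a_j=a_j^{(1)}-a_j^{(2)}$ and $A_j=A_j^{(1)}-A_j^{(2)}$, Remark~\ref{remark finite dimensional space} shows that it suffices to prove $\max_j(|a_j|+|A_j|)\le C\,d(\mathcal C_1,\mathcal C_2)$. Set $\varepsilon=d(\mathcal C_1,\mathcal C_2)$. The engine throughout is \eqref{Alessandrini stability}, which for any pair of solutions $u_i$ of $\Delta u_i+q^{(i)}u_i=0$ with first Cauchy component supported in $\Sigma$ gives $\big|\int_\Omega q\,u_1u_2\big|\le \varepsilon\,\|(u_1,\partial_\nu u_1)\|\,\|(u_2,\partial_\nu\overline{u_2})\|$.

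First I would establish stability at the flat boundary portion $\Sigma_1\subset\partial D_1\cap\Sigma$. Here I would take for $u_1,u_2$ the singular solutions associated with the Green's functions of the mixed Dirichlet--Robin problem constructed earlier (subsection~\ref{AE}), with singularities placed just outside $\Omega$ and driven towards a point $y\in\Sigma_1$. Because $q$ is affine inside $D_1$ and the product $u_1u_2$ develops a blow-up of order up to $|x|^{-n}$, the volume integral $\int_\Omega q\,u_1u_2$ admits a controlled asymptotic expansion whose leading terms reproduce $a_1+A_1\cdot y$ and, at the next order, the components of $A_1$; this is exactly why the second-derivative asymptotics of the Green's function are needed. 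Balancing the divergence of $\|(u_i,\partial_\nu u_i)\|$ as the singularity approaches $\Sigma_1$ against the factor $\varepsilon$ on the right-hand side yields a boundary estimate of H\"older type, $|a_1|+|A_1|\le C\varepsilon^{\alpha}$ for some $\alpha\in(0,1)$.

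Next I would propagate this control along the chain of subdomains. Using the connectivity hypothesis \eqref{catena dominii}, each $D_i$ is joined to $D_1$ through subdomains sharing flat interfaces $\Sigma_k\subset\Omega$. Having determined $q$ on $D_{j_{k-1}}$, I would repeat the singular-solution argument with the singularity approaching a point of the internal interface $\Sigma_k$ from the known side; the same asymptotic matching now recovers the one-sided limits of $q$ and $\nabla q$ on the other side, hence $a_{j_k}$ and $A_{j_k}$. Carrying the smallness of the singular solutions from $\Sigma$ into the interior along the chain requires the quantitative unique continuation / three-spheres estimates of subsection~\ref{PS}, and each interface contributes a further H\"older exponent, so that after the $N$ steps one arrives at $\max_j(|a_j|+|A_j|)\le C\varepsilon^{\alpha_N}$ with $\alpha_N\in(0,1)$.

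The main obstacle is the final upgrade from H\"older to Lipschitz. For this I would invoke the bootstrap argument of \cite{A-V}, exploiting that by Remark~\ref{remark finite dimensional space} the difference $q$ ranges in a fixed finite-dimensional space, so that the unit sphere $\{\,\|q\|_{L^\infty(\Omega)}=1\,\}$ in this space is compact. Arguing by contradiction, sequences $q^{(1)}_m,q^{(2)}_m$ satisfying \eqref{apriori q} with $\|q^{(1)}_m-q^{(2)}_m\|_{L^\infty(\Omega)}=1$ and $d(\mathcal C_{1,m},\mathcal C_{2,m})\to0$ would, after passing to a convergent subsequence, produce limit potentials that differ in $L^\infty$ yet share the same Cauchy data space, contradicting the uniqueness that underlies the H\"older estimate. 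The delicate point — and the reason the eigenvalue regime forces the use of the aperture distance \eqref{Hausdorff distance} rather than a Dirichlet-to-Neumann map — is to establish the (semi)continuity of the map $q\mapsto d(\mathcal C_1,\mathcal C_2)$ under $L^\infty$ convergence of the coefficients, which is what closes the contradiction and, after rescaling, converts the H\"older bound into the linear estimate \eqref{stabilita' globale}.
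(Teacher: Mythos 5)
Your outline of the constructive part of the argument matches the paper: reduce to the finitely many coefficients via Remark~\ref{remark finite dimensional space}, determine $q$ and $\partial_\nu q$ on the flat portion $\Sigma_1$ by driving the singularities of the mixed Dirichlet--Robin Green's functions towards the boundary and exploiting the asymptotics up to second derivatives (Propositions~\ref{ordine01}, \ref{ordine2}), then march along the chain of subdomains using the quantitative unique continuation of Proposition~\ref{proposizione unique continuation finale}. One inaccuracy there: the propagation across internal interfaces degrades the estimate to iterated \emph{logarithmic} moduli $\omega_{\eta}^{(l)}$, not to H\"older powers $\varepsilon^{\alpha_N}$; this is not cosmetic, because the precise normalized form of these estimates is what the final step of the paper relies on.

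The genuine gap is in your last paragraph. The paper's ``bootstrap'' from \cite{A-V} is \emph{not} a compactness--contradiction argument: each intermediate estimate is arranged in the self-referential form $E\leq C(\varepsilon_0+E)\,\omega_{\eta_K}^{(K-1)}\!\left(\frac{\varepsilon_0}{\varepsilon_0+E}\right)$ with $E=\|q^{(1)}-q^{(2)}\|_{L^\infty(\Omega)}$ attained on the last subdomain of the chain; if $E>e^2\varepsilon_0$ this forces $\frac{1}{C}\leq\omega_{\eta_K}^{(K-1)}\!\left(\frac{\varepsilon_0}{E}\right)$, hence $\frac{\varepsilon_0}{E}$ is bounded below by a fixed constant and $E\leq C'\varepsilon_0$ follows at once, with no limit procedure. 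The compactness argument you sketch does not close as stated: normalizing $\|q^{(1)}_m-q^{(2)}_m\|_{L^\infty}=1$ while $d(\mathcal C_{1,m},\mathcal C_{2,m})\to0$ is incompatible with the H\"older estimate you have just proved (which already forces $\|q^{(1)}_m-q^{(2)}_m\|\to0$), so in the relevant contradiction scenario the two sequences converge to the \emph{same} limit potential and you never obtain two distinct potentials with equal Cauchy data. To extract a contradiction one would instead need a lower bound on $d(\mathcal C_{1},\mathcal C_{2})/\|q^{(1)}-q^{(2)}\|$ along pairs collapsing to a common limit, i.e.\ differentiability of $q\mapsto\mathcal C_q$ in the aperture metric together with injectivity of its differential, uniformly over the class --- a substantial analytic ingredient that is neither supplied by your sketch nor needed in the paper. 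Moreover ``after rescaling'' is not available: the map $q\mapsto\mathcal C_q$ is nonlinear, so a bound for normalized differences does not rescale to the general Lipschitz estimate \eqref{stabilita' globale}.
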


\begin{remark}
An analogous result to the one obtained in theorem \ref{teorema principale} could be similarly obtained if the local Cauchy data (with vanishing condition on $\partial\Omega\setminus\overline\Sigma$ on the first component) considered here are to be replaced by the general local data with no vanishing condition on the first component as introduced in definition \ref{restrictions Cauchy data with zero trace condition}. Our present choice is motivated by the fact that the solutions that shall be actually used for the purpose of the stability estimates do indeed satisfy such zero trace condition.
\end{remark}



\section{Proof of the main result}\label{PMR}

The proof of our main result (theorem \ref{teorema principale}) is based on an argument that combines asymptotic type of estimates for a Green's function of the third kind for the operator

\begin{equation}\label{operatore conduttivita' misurabile}
L=\Delta + q(x)\qquad\mbox{in}\quad\Omega,
\end{equation}

(propositions \ref{ordine01}, \ref{ordine2}), with $q$ satisfying \eqref{apriori q}-\eqref{potential 2}, together with a result of unique continuation (proposition \ref{proposizione unique continuation finale}) for solutions to

\[Lu=0,\qquad\mbox{in}\quad\Omega.\]

Our idea in estimating $q^{(1)}-q^{(2)}$ exploits this singular behaviour on one hand estimating from below the blow up of  some singular solutions (we shall introduce below) $S_{{\mathcal{U}}_k}$ and some of its derivatives if  $q^{(1)}-q^{(2)}$ is large at some point. On the other hand we use estimates of propagation of smallness to show that $S_{{\mathcal{U}}_k}$ needs to be small if $q_1-q_2$ is small. We shall give the precise formulation of these results in what
follows.

\subsection{Piecewise linear potential}\label{PLP}

We collect here a series of auxiliary results. Most of the proofs are postponed to the following section.

\subsubsection{Asymptotics at interfaces}\label{subsection Green function}
We will find convenient to introduce Green's function not precisely for the physical domain $\Omega$ but for an augmented domain $\Omega_0$.

We recall that by assumption 3(c) of subsection \ref{subsec assumption domain} we can assume that there exists a point $P_1$ such that up to a rigid transformation of coordinates we have that $P_1=0$ and \eqref{flat} holds with $\Sigma=\Sigma_1$.

Denoting by

\[D_0=\left\{x\in(\mathbb{R}^n\setminus\Omega)\cap B_{r_0}\:\bigg|\:|x_i|<\frac{2}{3}r_0,\:i=1,\dots , n-1,\:\left|x_n-\frac{r_0}{6}\right|<\frac{5}{6}r_0\right\},\]

it turns out that the augmented domain $\Omega_0=\Omega\cup D_0$ is of Lipschitz class with constants $\frac{r_0}{3}$ and $\widetilde{L}$, where $\widetilde{L}$ depends on $L$ only.
Given $r>0$, we set
\begin{eqnarray}
&&\Sigma_0=\{ x\in \Omega_0 \ | \ |x_i|\le \frac{2}{3}r_0\  , \  x_n=-\frac{2}{3}r_0 \}, \\
&&(\Omega_0)_r = \{x\in \Omega_0\ | \ \mbox{dist}(x,\partial \Omega_0)>r \}.
\end{eqnarray}

In this section we shall introduce a mixed boundary value problem for $\Delta + q$ in $\Omega_0$ which is always well posed, independently of any \emph{a-priori} condition on $q$, besides the assumption of being real valued and bounded.
This shall enable us to construct a Green'n function for  $\Delta + q$ in $\Omega_0$. The underlying ideas of such construction can be traced back to an idea by Bamberger and Hua Duong \cite{Ba-Du}.

The main result is as follows.

\begin{proposition}\label{wellposedness}
Let $q\in L^{\infty}(\Omega_0)$, for any $y\in \Omega_0$ there exists a unique distributional solution $G(\cdot,y)$ to the problem

\setcounter{equation}{0}
\begin{equation}\label{Green_third_kind_statement}
\left\{
\begin{array}
{lcl}  \Delta G (\cdot, y) +q(\cdot)G(\cdot, y) =-\delta(\cdot-y)\ ,&&
\mbox{in $\Omega_0$ ,}
\\
 G(\cdot,y)= 0\ ,   && \mbox{on $\partial\Omega_0\setminus \Sigma_0$ ,}
\\
\partial _{\nu}{G}(\cdot,y) +i G(\cdot, y) =0 \ , && \mbox{on $\Sigma_0$ .}
\end{array}
\right.
\end{equation}
In particular we have
\begin{eqnarray}\label{beh1}
|G(x,y)|\le C |x-y|^{2-n},\ \ && \mbox{for any}\ \ x,y\in \Omega\ , \ x\neq y \ ,
\end{eqnarray}
where $C>0$ is a constant depending on the a priori data only.
\end{proposition}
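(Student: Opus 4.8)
The plan is to split the proof into an abstract well-posedness statement for the mixed boundary value problem with regular (i.e.\ $H^{-1}$-type) data, followed by a treatment of the Dirac source obtained by subtracting the free fundamental solution and iterating on the zero-order term $q$. I would first settle \emph{uniqueness}, which is exactly where the complex impedance condition on $\Sigma_0$ pays off. Let $u\in H^1(\Omega_0)$ solve the homogeneous problem ($\delta\equiv 0$), that is $\Delta u+qu=0$ in $\Omega_0$, $u=0$ on $\partial\Omega_0\setminus\Sigma_0$ and $\partial_\nu u+iu=0$ on $\Sigma_0$. Multiplying by $\bar u$ and integrating by parts yields $-\int_{\Omega_0}|\nabla u|^2-i\int_{\Sigma_0}|u|^2+\int_{\Omega_0}q|u|^2=0$; since $q$ is real, taking the imaginary part forces $\int_{\Sigma_0}|u|^2=0$, hence $u=0$ and therefore $\partial_\nu u=-iu=0$ on $\Sigma_0$. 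Thus $u$ has vanishing Cauchy data on the open flat portion $\Sigma_0$, and the qualitative unique continuation principle for $\Delta+q$ with $q\in L^\infty$ gives $u\equiv 0$. This uses no sign or spectral hypothesis on $q$, which is precisely the purpose of the device of Bamberger and Hua Duong.

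For \emph{existence} with data $f$ in the dual of $V:=\{v\in H^1(\Omega_0)\ :\ v|_{\partial\Omega_0\setminus\Sigma_0}=0\}$, I would introduce the sesquilinear form $B(u,v)=\int_{\Omega_0}\nabla u\cdot\nabla\bar v+i\int_{\Sigma_0}u\bar v-\int_{\Omega_0}qu\bar v$ on $V$. Its principal part $B_0(u,v)=\int_{\Omega_0}\nabla u\cdot\nabla\bar v+i\int_{\Sigma_0}u\bar v$ is coercive: since $v$ vanishes on the large Dirichlet part of $\partial\Omega_0$, the Poincar\'e inequality gives $\mathrm{Re}\,B_0(u,u)=\int_{\Omega_0}|\nabla u|^2\geq c\|u\|_{H^1}^2$. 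The term $u\mapsto-\int_{\Omega_0}qu\bar v$ is a compact perturbation, because multiplication by $q\in L^\infty$ composed with the compact embedding $H^1(\Omega_0)\hookrightarrow L^2(\Omega_0)$ is compact. Hence $B$ is of Fredholm type, and the uniqueness just established, through the Fredholm alternative, delivers a unique solution depending continuously on the data.

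To realize the Dirac source I would write $G(\cdot,y)=\Phi(\cdot,y)+w(\cdot,y)$, where $\Phi(x,y)=c_n|x-y|^{2-n}$ is the free fundamental solution, $\Delta_x\Phi=-\delta(\cdot-y)$. Then $w$ solves $\Delta w+qw=-q\Phi$ with the inhomogeneous mixed conditions induced by $\Phi$, a problem whose data are admissible for the well-posedness step above. Since $q\Phi$ inherits only the mild singularity $|x-y|^{2-n}$, I would organize the construction as the iteration $G=\sum_{k\geq 0}G_k$ with $\Delta G_0=-\delta$ subject to the mixed boundary conditions and $\Delta G_{k+1}=-qG_k$ with homogeneous conditions: each solve convolves against the $|x-y|^{2-n}$ kernel and thereby strictly improves the integrability and decay of the singularity, so that after finitely many steps the corrections are bounded and the series converges, with leading singularity that of $G_0$. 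The pointwise bound $|G(x,y)|\leq C|x-y|^{2-n}$ for $x,y\in\Omega$ then follows from the explicit size of $\Phi$ together with the local boundedness (Moser--Stampacchia) of the regular remainder.

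The hard part will be the quantitative control of this iteration: proving that the successive corrections genuinely lower the order of the singularity in \emph{every} dimension $n\geq 3$, that the series converges uniformly, and, above all, that the constant $C$ in the pointwise bound depends only on the a-priori data ($r_0,L,E_0,B,N,n$) and neither on $y$ nor on finer features of $q$. The other delicate bookkeeping point is packaging the boundary traces of $\Phi$, near both $\Sigma_0$ and the Dirichlet part, as admissible right-hand sides in $V^{*}$ uniformly as $y$ ranges over $\Omega_0$.
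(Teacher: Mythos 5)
Your uniqueness argument (multiply by $\bar u$, take the imaginary part to kill $u$ on $\Sigma_0$, then invoke unique continuation from vanishing Cauchy data) and your overall architecture (Fredholm alternative for the variational problem, then peel off the singularity of the Dirac source by iterated convolution against the $|x-y|^{2-n}$ kernel) coincide with the paper's proof. But there are two concrete gaps. First, and most importantly, the Fredholm alternative plus the open mapping theorem only gives you \emph{some} bound $\|v\|_{H^1}\le C\|f\|_{L^2}$: the constant is the norm of the inverse operator and depends on $q$ in an uncontrolled way, not merely on $\|q\|_{L^\infty}$ and the geometry. Since the whole point of \eqref{beh1} is a constant depending only on the a priori data, this is the crux, and you flag it as ``the hard part'' without supplying the mechanism. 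The paper closes it by making the uniqueness step quantitative: setting $\varepsilon^2=\int_{\Sigma_0}(|v|^2+|\partial_\nu v|^2)$, $\eta^2=\|f\|_{L^2}^2$, $\delta^2=\|v\|_{L^2}^2$, $E^2=\|\nabla v\|_{L^2}^2$, the impedance identity gives $\varepsilon^2\le 2\eta E$, and the logarithmic stability estimate for the Cauchy problem (from \cite{A-R-R-V}) gives $\delta^2\le(E^2+\varepsilon^2+\eta^2)\,\omega\bigl(\tfrac{\varepsilon^2+\eta^2}{E^2+\varepsilon^2+\eta^2}\bigr)$; combining these and inverting $\omega$ yields $E^2\le C\eta^2$ with $C$ depending only on $n,r_0,L,\|q\|_{L^\infty}$. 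Without some such quantitative unique continuation input, your proof does not deliver the stated uniformity of $C$.

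Second, the construction written as an infinite Neumann series $G=\sum_{k\ge 0}G_k$ with $\Delta G_{k+1}=-qG_{k}$ ``converging'' is not viable in general: convergence would require the operator $f\mapsto\int_{\Omega_0}G_0(\cdot,z)q(z)f(z)\,dz$ to be a contraction, which fails for large $q$ and is exactly the kind of spectral hypothesis the proposition is designed to avoid. The correct device, and the one the paper uses, is a \emph{finite} truncation: after $J=[\tfrac{n-1}{2}]$ iterations the kernel $R_J$ satisfies $|R_J(x,y)|\le C(|\log|x-y||+1)$ (or is bounded), so $-qR_J(\cdot,y)\in L^2(\Omega_0)$ uniformly in $y$, and one solves the resulting inhomogeneous mixed problem \emph{exactly} for the last correction $R_{J+1}$ using the well-posedness above, rather than continuing the series. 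With that modification (and the boundary regularity for the remainder handled by reflection across $\Sigma_0$ as in the paper's Remark \ref{bound}), your outline matches the paper's proof.
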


In what follows we shall make a repeated use of the solution to \eqref{Green_third_kind_statement} in the special case $q=0$.

\begin{definition}
We denote by $G_0=G_0(x,y)$ the Green's function for the Laplacian and mixed boundary conditions which solves in the distributional sense
 \begin{equation}\label{Green_third_kind_Laplace}
\left\{
\begin{array}
{lcl} \Delta G_0 (\cdot, y)=-\delta(\cdot-y)\ ,&&
\mbox{in $\Omega_0$ ,}
\\
 G_0(\cdot,y)= 0\ ,   && \mbox{on $\partial\Omega_0\setminus \Sigma_0$ ,}
\\
\partial _{\nu}{G_0}(\cdot,y) +i G_0(\cdot, y) =0 \ , && \mbox{on $\Sigma_0$ .}
\end{array}
\right.
\end{equation}
\end{definition}
\begin{remark}\label{bound}
The existence and uniqueness of a distributional solution $G_0\in L^1(\Omega_0)$ to \eqref{Green_third_kind_Laplace} is a consequence of the standard theory on boundary value problem for the Laplace equation.
It is also well-known that
\begin{equation}
G_0 (\cdot , y)\in H^1(\Omega_0\setminus B_{\epsilon}(y)),\ \ \mbox{for every}\ \epsilon >0
\end{equation}
and
\begin{eqnarray}\label{behint}
|G_0(x,y)|\le C |x-y|^{2-n},\ \ \mbox{for any}\ \ \ x,y\in ({\Omega_0})_{\frac{r_0}{4}}\ ,\ x\neq y,
\end{eqnarray}
where $C>0$ is a constant depending on the a priori data only.

Estimate \eqref{behint} holds also when $x$ and $y$ approach $\partial \Omega$. Indeed, by adapting the change of variable arguments  in Lemma 4.5 in \cite{S}, we may consider an homogeneous Neumann condition on $\Sigma_0$ instead. The latter, together with the techniques carried over in \cite[Prop. 8.3]{A-S}, enables to perform an even extension $\widetilde{G_0}$ of $G_0$ across $\Sigma_0$ . We define the set $\Sigma_0^{\perp}=\{ x\in \mathbb{R}^n\ | \ |x_i|=\frac{2}{3}r_0\ , \  -\frac{19}{24}r_0\le x_n\le -\frac{13}{24}r_0\}$ and we observe that the extension $\widetilde{G_0}$ satisfies an homogeneous Dirichlet condition on $\Sigma_0^{\perp}$ and it is Lipschitz continuous up to $\Sigma_0^{\perp}$ . Finally, the results in \cite{Lit-St-W} allow us to conclude that
\begin{eqnarray}\label{beh}
|G_0(x,y)|\le C |x-y|^{2-n},\   \mbox{for any}\ \ \ x,y\in {\Omega_0},\ x\neq y,
\end{eqnarray}
where $C>0$ is a constant depending on the a priori data only.
\end{remark}

\begin{proposition}
For any $y\in \Omega_0$ and every $r>0$ we have that
\begin{equation}\label{gradientG}
\int_{\Omega_0\setminus B_r(y)}|\nabla G(\cdot,y)|^2\le C r^{2-n},
\end{equation}
\end{proposition}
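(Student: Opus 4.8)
The plan is to prove \eqref{gradientG} by a Caccioppoli-type energy estimate, testing the weak formulation of \eqref{Green_third_kind_statement} against a suitable multiple of $G$ itself and feeding in the pointwise bound \eqref{beh1}. First I would fix $y\in\Omega_0$ and introduce a Lipschitz cut-off $\eta$ with $\eta\equiv 0$ on $B_{r/2}(y)$, $\eta\equiv 1$ on $\Omega_0\setminus B_r(y)$ and $|\nabla\eta|\le C/r$. Since $G(\cdot,y)\in H^1(\Omega_0\setminus B_\varepsilon(y))$ for every $\varepsilon>0$ and $G$ has vanishing trace on $\partial\Omega_0\setminus\Sigma_0$, the function $\phi=\eta^2 G$ is an admissible test function: it lies in $H^1(\Omega_0)$, vanishes near $y$, and has zero trace on the Dirichlet part of $\partial\Omega_0$.

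The weak formulation of \eqref{Green_third_kind_statement} reads
\begin{equation*}
\int_{\Omega_0}\nabla G\cdot\nabla\overline{\phi}+i\int_{\Sigma_0}G\,\overline{\phi}-\int_{\Omega_0}qG\,\overline{\phi}=\overline{\phi}(y),
\end{equation*}
the Robin term on $\Sigma_0$ being already encoded in the bilinear form. Taking $\phi=\eta^2 G$ makes the right-hand side vanish (as $\eta$ vanishes near $y$), and substituting $\nabla\overline\phi=\eta^2\nabla\overline G+2\eta\,\overline G\,\nabla\eta$ yields
\begin{equation*}
\int_{\Omega_0}\eta^2|\nabla G|^2+2\int_{\Omega_0}\eta\,\overline G\,\nabla\eta\cdot\nabla G+i\int_{\Sigma_0}\eta^2|G|^2-\int_{\Omega_0}q\,\eta^2|G|^2=0.
\end{equation*}
The crucial structural point is that, on taking real parts, the Robin contribution $i\int_{\Sigma_0}\eta^2|G|^2$ is purely imaginary and drops out, while $\int q\,\eta^2|G|^2$ is real since $q$ is real. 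A Young inequality on the middle term, absorbing $\tfrac12\int\eta^2|\nabla G|^2$ into the left-hand side, then gives
\begin{equation*}
\int_{\Omega_0}\eta^2|\nabla G|^2\le C\int_{\Omega_0}|\nabla\eta|^2|G|^2+C\,E_0\int_{\Omega_0}\eta^2|G|^2 .
\end{equation*}

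It remains to estimate the two right-hand terms via \eqref{beh1}. Since $\nabla\eta$ is supported in the annulus $B_r(y)\setminus B_{r/2}(y)$, where $|G|\le Cr^{2-n}$ and $|\nabla\eta|\le C/r$, the first integral is bounded by $C r^{-2}\cdot r^{n}\cdot r^{2(2-n)}=C r^{2-n}$. For the second, \eqref{beh1} gives $\int_{\Omega_0\setminus B_{r/2}(y)}|G|^2\le C\int_{r/2}^{\operatorname{diam}\Omega_0}\rho^{3-n}\,d\rho$, which is $Cr^{4-n}$ for $n\ge5$, $C\log(1/r)$ for $n=4$ and bounded for $n=3$; in every case this is $\le Cr^{2-n}$ for $r$ up to $\operatorname{diam}\Omega_0$ (and the estimate is vacuous for larger $r$, the set $\Omega_0\setminus B_r(y)$ being then empty). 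Since $\eta\equiv1$ on $\Omega_0\setminus B_r(y)$, combining these bounds gives $\int_{\Omega_0\setminus B_r(y)}|\nabla G|^2\le Cr^{2-n}$.

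I expect the main obstacle to be the careful handling of the boundary terms: one must check that $\eta^2G$ is genuinely admissible (vanishing trace on the Dirichlet part together with $H^1$ regularity up to $\Sigma_0$), and the whole argument hinges on the observation that the Robin datum with coefficient $i$ produces a purely imaginary boundary term annihilated by taking real parts — this is exactly what lets the energy identity close \emph{without} any spectral or sign hypothesis on $q$. A secondary point is that \eqref{beh1}, stated for $x,y\in\Omega$, is here applied on the slightly larger domain $\Omega_0$; this is harmless, since the construction and the bound \eqref{beh} for $G_0$ extend to $\Omega_0$, and in any event the $q$-term is of lower order than $r^{2-n}$.
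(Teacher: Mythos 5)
Your proof is correct and is exactly the argument the paper intends: its proof of this proposition consists of the single sentence ``combine the Caccioppoli inequality with \eqref{beh1}'', and your cut-off computation (with the observation that the Robin term is purely imaginary and the $q$-term is lower order) is precisely the filled-in version of that.
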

where $C>0$ is a constant depending on $n, r_0, L$ and on $\|q\|_{L^{\infty}(\Omega_0)}$.
\begin{proof}
The proof can be obtained in a straightforward fashion by combining Caccioppoli inequality with \eqref{beh1}.
\end{proof}

\begin{proposition}\label{ordine01}
For every $x,y\in (\Omega_0)_{r_0}, \ x\neq y$ we have that
\begin{equation}\label{3a}
|G(x,y)-\Gamma(x,y)|\le \left\{
\begin{array}
{lcl} \ {C} \ , &&\mbox{if}\ n=3,\\
\ {C} (|\log|x-y|| +1)\ , &&\mbox{if}\ n=4,\ \\
\ {C} |x-y|^{4-n},
&&\mbox{if}\ n\ge 5 ,
\end{array}
\right.
\end{equation}
and
\begin{equation}\label{3b}
|\nabla_y G(x,y)-\nabla_y \Gamma(x,y)|\le \left\{
\begin{array}
{lcl}
\ {C} (|\log|x-y|| +1)\ , &&\mbox{if}\ n=3,\ \\
\ {C} |x-y|^{3-n},
&&\mbox{if}\ n\ge 4 .
\end{array}
\right.
\end{equation}
\end{proposition}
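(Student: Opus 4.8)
The plan is to compare $G$ with $\Gamma$ (the fundamental solution of $-\Delta$, so that $\Delta_x\Gamma(x,y)=-\delta(x-y)$ and $|\Gamma(x,y)|\le C|x-y|^{2-n}$) by interposing the Laplacian Green's function $G_0$ of \eqref{Green_third_kind_Laplace}, writing
\[
G(x,y)-\Gamma(x,y)=\bigl(G(x,y)-G_0(x,y)\bigr)+\bigl(G_0(x,y)-\Gamma(x,y)\bigr).
\]
The second bracket is harmonic in $x$, since the two Dirac masses cancel; as $y\in(\Omega_0)_{r_0}$, its Cauchy data on $\partial\Omega_0$ are bounded by a constant, so by the well-posedness of the mixed problem (Proposition \ref{wellposedness}) and interior estimates it is bounded, together with its $y$-gradient, by a constant on $(\Omega_0)_{r_0}$. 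Because $|x-y|\le\operatorname{diam}\Omega_0\le Cr_0$, every right-hand side in \eqref{3a}--\eqref{3b} is bounded below by a positive constant, so this $O(1)$ contribution is harmlessly absorbed into $C$.

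The first bracket carries the singular behaviour. Since $G$ and $G_0$ satisfy the \emph{same} mixed homogeneous (Dirichlet / complex Robin) conditions, their difference $W:=G-G_0$ solves $\Delta W=-qG$ in $\Omega_0$ with homogeneous boundary conditions; the Robin terms cancel in Green's second \emph{bilinear} identity (the pairing must be the unconjugated one because the Robin constant is $i$), which yields
\[
W(x,y)=\int_{\Omega_0}G_0(x,z)\,q(z)\,G(z,y)\,dz.
\]
Inserting $|G_0(x,z)|\le C|x-z|^{2-n}$ (from \eqref{beh}), $|G(z,y)|\le C|z-y|^{2-n}$ (as in \eqref{beh1}, holding throughout $\Omega_0$) and $|q|\le E_0$ reduces the estimate to the Riesz-type convolution $\int_{\Omega_0}|x-z|^{2-n}|z-y|^{2-n}\,dz$. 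The standard bound for such a product over a bounded domain gives $C$ when $2(n-2)<n$ (i.e.\ $n=3$), $C(|\log|x-y||+1)$ when $2(n-2)=n$ (i.e.\ $n=4$), and $C|x-y|^{\,n-2(n-2)}=C|x-y|^{4-n}$ when $2(n-2)>n$ (i.e.\ $n\ge5$); this is exactly \eqref{3a}.

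For \eqref{3b} I would first establish the pointwise bound $|\nabla_y G(x,y)|\le C|x-y|^{1-n}$. By the symmetry $G(x,y)=G(y,x)$ (again the Robin terms cancel) this equals $|\nabla_1 G(y,x)|$, and on $B_{|x-y|/2}(y)$ the function $G(\cdot,x)$ solves $\Delta u+qu=0$ with $\|u\|_{L^\infty}\le C|x-y|^{2-n}$, so interior gradient estimates with scaling give the claim. Differentiating the representation in $y$,
\[
\nabla_y W(x,y)=\int_{\Omega_0}G_0(x,z)\,q(z)\,\nabla_y G(z,y)\,dz,
\]
and inserting $|G_0(x,z)|\le C|x-z|^{2-n}$ and $|\nabla_y G(z,y)|\le C|z-y|^{1-n}$ reduces matters to $\int_{\Omega_0}|x-z|^{2-n}|z-y|^{1-n}\,dz$. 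Since $(n-2)+(n-1)=2n-3$ equals $n$ exactly for $n=3$ and exceeds it for $n\ge4$, the same convolution estimate yields $C(|\log|x-y||+1)$ for $n=3$ and $C|x-y|^{\,n-(2n-3)}=C|x-y|^{3-n}$ for $n\ge4$, together with the absorbed $O(1)$ from $\nabla_y(G_0-\Gamma)$; this is \eqref{3b}.

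The main obstacle is the pointwise gradient bound on $G$ together with the justification of differentiation under the integral sign. Because $q$ is merely $L^\infty$, the interior gradient control must come from Caccioppoli and $W^{2,p}$ elliptic estimates rather than Schauder theory, and the scaling must be tracked carefully so that precisely the exponent $1-n$ is produced; one must also check that $|z-y|^{1-n}$ is integrable near $z=y$ in order to move $\nabla_y$ inside the integral. Finally, the two borderline dimensions ($n=4$ in \eqref{3a} and $n=3$ in \eqref{3b}), where the convolution degenerates to a logarithm, require the critical case of the convolution lemma rather than the clean power-law regime.
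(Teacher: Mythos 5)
Your proposal is correct and follows essentially the same route as the paper: the same splitting $G-\Gamma=(G-G_0)+(G_0-\Gamma)$ with the harmonic bracket absorbed into the constant, the same integral representation of $G-G_0$ obtained from Green's identity, and the same Riesz-type convolution estimates (the paper cites Miranda) in the three dimensional regimes. The only minor variation is in \eqref{3b}: the paper keeps the $y$-derivative on $G_0$, whose gradient asymptotics $|\nabla_y G_0(z,y)|\le C|z-y|^{1-n}$ are classical, whereas you place it on $G$ and therefore must supply the interior bound $|\nabla_y G(z,y)|\le C|z-y|^{1-n}$ for the Schr\"odinger Green's function --- which your rescaled interior-estimate argument handles correctly.
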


Our next goal is to analyze the asymptotics of $\nabla^2_y G(x,y)$ ($\nabla^2_y$= Hessian matrix). To this purpose it is necessary to use accurately the fine structure of $q$ as a piecewise linear function. We are especially interested to the case when $x,y$ are near to an interface of $q$. More precisely, we examine the case when $x$ and $y$ are on the opposite sides of the interface and $y$ approaches orthogonally the interface.

\begin{proposition}\label{ordine2}
Let $Q_{{l}+1}$ be a point such that  $Q_{{l}+1}\in B_{\frac{r_0}{8}}(P_{{l}+1})\cap \Sigma_{{l}+1}$ with $l\in\{1, \dots, N-1 \}$\ .
There exist a constant $\overline{C}>0$ depending on $n, r_0, L$ and $\|q\|_{L^{\infty}(\Omega_0)}$ such that following inequality hold true for every ${x}\in B_{\frac{r_0}{16}}(Q_{l+1})\cap
D_{j_{l+1}}$ and every $=Q_{l+1}-r e_n$, where $r\in
(0,\frac{r_0}{{16}})$
\begin{eqnarray}\label{second_order_statement}
|\nabla^2_y (G(x,y)-\Gamma(x,y))|\le \overline{C}r^{2-n}\  .
\end{eqnarray}
\end{proposition}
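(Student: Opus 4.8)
The plan is to reduce the estimate to an interior elliptic regularity bound for the remainder $R(x,\cdot):=G(x,\cdot)-\Gamma(x,\cdot)$ on a ball of radius comparable to $r$, on which $R$ solves an equation with \emph{smooth} coefficient and \emph{regular} right-hand side. As in the derivation underlying Proposition~\ref{ordine01} (which already controls $\nabla_y R$), the reciprocity between $G$ and the Green's function of the adjoint mixed problem — the operator $\Delta+q$ with the Robin coefficient $i$ replaced by $-i$ — shows that $G(x,\cdot)$ solves $\Delta_y G(x,\cdot)+q(\cdot)G(x,\cdot)=-\delta(\cdot-x)$ in $\Omega_0$. Subtracting $\Delta_y\Gamma(x,\cdot)=-\delta(\cdot-x)$ cancels the Dirac masses and yields
\[
\Delta_y R(x,y)+q(y)\,R(x,y)=-q(y)\,\Gamma(x,y)\qquad\text{in }\ \Omega_0 .
\]
By the flatness of $\Sigma_{l+1}$ and the constraint $r<r_0/16$, the ball $B_{r/2}(y)$ centred at $y=Q_{l+1}-re_n$ is contained in $D_{j_l}$, so it meets no interface and there $q\equiv q_{j_l}$ is affine (in particular $\nabla^2_y q\equiv 0$). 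Since $x\in D_{j_{l+1}}$ lies on the opposite side of the interface, $|x-y'|\ge r/2$ for every $y'\in B_{r/2}(y)$, whence $\Gamma(x,\cdot)$, and thus the whole right-hand side, is smooth on this ball.

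On this ball I would differentiate the equation once: using $\nabla_y q\equiv A_{j_l}$ and $R+\Gamma=G$, the function $v:=\partial_{y_k}R$ solves $\Delta_y v+q\,v=g$ with $g=-(A_{j_l})_k\,G(x,\cdot)-q\,\partial_{y_k}\Gamma(x,\cdot)$. The bound \eqref{beh1} and the companion gradient bound for $\Gamma$ give $|g|\le C(r^{2-n}+r^{1-n})\le C\,r^{1-n}$ on $B_{r/2}(y)$. Rescaling to the unit ball, invoking the interior Schauder gradient estimate, and unscaling yields
\[
\|\nabla_y v\|_{L^\infty(B_{r/4}(y))}\le C\Big(r^{-1}\|\nabla_y R\|_{L^\infty(B_{r/2}(y))}+r\,\|g\|_{L^\infty(B_{r/2}(y))}\Big).
\]
Feeding in the first-order asymptotics of Proposition~\ref{ordine01}, namely $|\nabla_y R|\le C\,|x-y'|^{3-n}\le C\,r^{3-n}$, the first term is $r^{-1}\cdot r^{3-n}=r^{2-n}$ and the second is $r\cdot r^{1-n}=r^{2-n}$, so that $|\nabla_y^2R(x,y)|\le \overline C\,r^{2-n}$ for every $n\ge 4$. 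Note that the radius of the localizing ball is dictated by $\mathrm{dist}(y,\Sigma_{l+1})=r$, not by $|x-y|$, which is exactly why each derivative costs a factor $r^{-1}$ and why the conclusion is naturally phrased in powers of $r$.

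The main obstacle is the borderline dimension $n=3$, where Proposition~\ref{ordine01} only provides the logarithmically worse bound $|\nabla_y R|\le C(|\log|x-y||+1)$; inserting it above produces the spurious factor $r^{-1}|\log r|$ instead of $r^{-1}=r^{2-n}$. This logarithm reflects only the \emph{size}, not the \emph{oscillation}, of $\nabla_y R$ on $B_{r/2}(y)$: indeed the logarithmic part varies like $\log|x-\cdot|$, whose oscillation over a ball of radius $r/2$ placed at distance $r$ from $x$ is merely $O(1)$. To exploit this one must use the affine structure of $q$ on $D_{j_l}$ directly and peel off from $R$ an explicit contribution carrying precisely the logarithmic growth of $\nabla_y R$, built from $q_{j_l}$ and elementary functions of $|x-\cdot|$; for the resulting corrector the interior gradient estimate can be applied in \emph{oscillation} form, the logarithm cancels, and the sharp bound $r^{-1}$ is recovered. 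Carrying out this decomposition cleanly — verifying the cancellation of the logarithmic terms and controlling the self-interaction term $qR$ — is the most technical point of the argument.
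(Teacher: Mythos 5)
Your strategy --- differentiate the interior equation $\Delta_y R+qR=-q\,\Gamma$ once on the ball $B_{r/2}(y)\subset D_{j_l}$, where $q$ is affine and the right-hand side is smooth, then apply a rescaled interior gradient estimate to $v=\partial_{y_k}R$ --- is genuinely different from the paper's. The paper does not bootstrap from the first-order asymptotics at all: it represents $R$ via Green's identity as boundary integrals plus the volume potential $\int\Gamma(z-y)q(z)G(z,x)\,dz$, localizes to $\widehat R=\int_{B_{r_0/8}}\Gamma(z-y)q(z)G(z,x)\,dz$, integrates by parts once in $z$ (producing an interface term on the flat part $B'$ carrying the jump $[q]$, nontrivial only for $i=j=n$), and thereby bounds $|\partial^2_{y_iy_j}R|\le C|x-y|^{2-n}$ directly for all $(i,j)\neq(n,n)$; the remaining $(n,n)$ entry is recovered from the equation through $\partial^2_{y_n}R=-q(R+\Gamma)-\Delta'_yR$. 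Your route is more elementary and, for $n\ge4$, does close: $r^{-1}\|\nabla_yR\|_{L^\infty(B_{r/2}(y))}\le C r^{-1}r^{3-n}=Cr^{2-n}$ and $r\|g\|_{L^\infty}\le Cr^{2-n}$. (A smaller caveat: Proposition \ref{ordine01} is stated only for $x,y\in(\Omega_0)_{r_0}$, whereas the points in Proposition \ref{ordine2} sit near an internal interface that need not be at distance $r_0$ from $\partial\Omega_0$; the paper's proof avoids this by relying only on the global bound \eqref{beh1}.)

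The genuine gap is the case $n=3$, which you flag but do not close. There \eqref{3b} only gives $|\nabla_yR|\le C(|\log|x-y||+1)$, and your scheme yields $r^{-1}|\log r|$ rather than the claimed $r^{2-n}=r^{-1}$. The oscillation form of the gradient estimate does not rescue this as stated: the oscillation of $\partial_{y_k}R$ over $B_{r/2}(y)$ is controlled by $r\sup|\nabla^2_yR|$, which is precisely the quantity being estimated, so the argument is circular unless the logarithmic corrector is actually exhibited. Identifying that corrector (the explicit part of $\nabla_yR$ carrying the logarithm, which originates in the volume potential near $z=y$) essentially amounts to carrying out the potential-theoretic decomposition the paper performs. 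As written, your proof establishes the proposition only for $n\ge4$, while $n=3$ is both within the scope of Theorem \ref{teorema principale} and the physically most relevant dimension.
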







\subsubsection{Quantitative unique continuation}
We consider the operator  $L_i$ given by
\begin{equation}\label{operatori Li}
L_i =\Delta +\widetilde{q}^{(i)}(x)\qquad\mbox{in}\quad\Omega_0,\quad
i=1,2,
\end{equation}
where $\widetilde{q}^{(i)}$ is the extension on $\Omega_0$ of $q^{(i)}$ obtained by setting $\widetilde{q}^{(i)}|_{D_0}=1$,  for $i=1,2$.
For every $y\in \Omega_0$, we shall denote with $G_i(\cdot, y)$ the Green function solution to \eqref{Green_third_kind_statement} when $q=\widetilde{q}^{(i)}$ for $i=1,2$. We also define the set


\[\widehat{D}_{0}=\left\{x\in D_0\:|\:\textnormal{dist}(x,\Sigma_1)\geq \frac{r_0}{3}\right\}.\]

Let $K\in{1,\dots , N}$ be such that

\begin{eqnarray}\label{DK max}
E=\|q^{(1)}-q^{(2)}\|_{L^{\infty}(\Omega)}=\|q^{(1)}-q^{(2)}\|_{L^{\infty}(D_K)}
\end{eqnarray}


and recall that there exist $j_1,\dots , j_K\in{1,\dots , N}$ such that

\[D_{j_1}=D_1,\dots D_{j_K}=D_K,\]

with $D_{j_1},\dots D_{j_K}$ satisfying assumption $3(c)$. For simplicity, let us rearrange the indices of these subdomains so that the above mentioned chain is simply denoted by $D_1, \dots, D_K,\ K\le N$.  We also denote
\begin{eqnarray}
\mathcal{W}_k = \bigcup_{i=0}^{k}D_{i},\qquad \mathcal{U}_k = \Omega_0\setminus\overline{\mathcal{W}_k},\quad\textnormal{for}\:k=1,\dots,K
\end{eqnarray}
and for any $y,z\in \mathcal{W}_k$
\begin{eqnarray}
{\tilde{S}}_{\mathcal{U}_k}(y,z)=\int_{\mathcal{U}_k}(\widetilde{q}^{(1)}-\widetilde{q}^{(2)})\widetilde{ G}_1(\cdot,y)\widetilde{G}_2(\cdot,z),\quad \textnormal{for}\:k=1,\dots,K.
\end{eqnarray}

It is a relatively straightforward matter to see that for every $y,z\in \mathcal{W}_k$ with $k=0,\dots, K $ we have that ${\widetilde{S}}_{{\mathcal{U}}_k}(\cdot,z),
{\widetilde{S}}_{{\mathcal{{U}}}_k}(y,\cdot)\in H^1_{loc}({\mathcal{W}}_k)$ are weak solutions to
\begin{eqnarray}
& &\left(\Delta+q^{(1)}(x)\right)
{\widetilde{S}}_{{{\mathcal{U}}}_k}(\cdot,z)=0,\quad\mbox{in}\:{\mathcal{W}}_k ,\\
& &\left(\Delta+q^{(2)}(x)\right){\widetilde S}_{{{\mathcal{U}}}_k}(y,\cdot)=0
,\quad\mbox{in}\:{\mathcal{W}}_k.
\end{eqnarray}

It is expected that some derivatives of ${\widetilde {S}}_{{\mathcal{{U}}}_k}(y,z)$ blow up as $y,z$ approach simultaneously one point of $\partial{\mathcal{U}}_k$. The following estimate for $\widetilde{S}_{\mathcal{U}_k}(y,z)$ holds true, for $k=1,\dots , K$.

\begin{proposition}\label{proposizione unique continuation finale}({\bf{Estimates of unique continuation}})
If, for a positive number $\varepsilon_0$, we have

\begin{equation}\label{estim0}
\left|\widetilde{S}_{\mathcal{U}_k}(y,z)\right|\leq
r_0\varepsilon_0,\quad for\:every\: (y,z)\in
\widehat{D}_0\times \widehat{D}_0,
\end{equation}


then the following inequalities hold true for every $r\leq 2r_1$

\begin{equation}\label{estim1}
\left|\widetilde{S}_{\mathcal{U}_k}\left(y_{k+1},y_{k+1}\right)\right|
\leq  C (E+\varepsilon_0) \left(\frac{\varepsilon_0}{E+\varepsilon_0}\right)^{r^2\beta^{2N_1}}(1+r^{2\gamma}),
\end{equation}

\begin{equation}\label{estim2}
\left|\partial_{y_j}\partial_{z_i}\widetilde{S}_{\mathcal{U}_k}\left(y_{k+1},y_{k+1}\right)\right|
\leq  C (E+\varepsilon_0)\left(\frac{\varepsilon_0}{E+\varepsilon_0}\right)^{r^2\beta^{2N_1}}(1+r^{2\gamma})r^{-2},
\end{equation}

\begin{equation}\label{estim3}
\left|\partial^{2}_{y_j}\partial^{2}_{z_i}\widetilde{S}_{\mathcal{U}_k}\left(y_{k+1},y_{k+1}\right)\right|
\leq  C (E+\varepsilon_0)\left(\frac{\varepsilon_0}{E+\varepsilon_0}\right)^{r^2\beta^{2N_1}}(1+r^{2\gamma})r^{-4},
\end{equation}

for any $i,j=1,\dots , n$, where $y_{k+1}=P_{k+1}-2r\nu(P_{k+1})$, $\nu$ is the exterior unit normal to $\partial{D}_k$ at $P_{k+1}$, $\beta=\frac{\ln(8/7)}{\ln 4}$, $r_1=\frac{r_0}{16}$, $\gamma=2-\frac{n}{2}$,  and the constants $N_1, C>0$ depend on the a-priori data only.

\end{proposition}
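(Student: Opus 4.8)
The plan is to prove the three estimates \eqref{estim1}--\eqref{estim3} via a three-sphere/propagation-of-smallness argument, carried along the chain of flat interfaces $\Sigma_1,\dots,\Sigma_K$, following the bootstrap strategy of \cite{A-V}. The starting hypothesis \eqref{estim0} gives control of $\widetilde{S}_{\mathcal{U}_k}$ on $\widehat{D}_0\times\widehat{D}_0$; the goal is to propagate this smallness through the sequence of subdomains $D_1,\dots,D_k$ until we reach a neighborhood of $P_{k+1}$, where the singular point $y_{k+1}=P_{k+1}-2r\nu(P_{k+1})$ lives. Since, as recalled in the excerpt, $\widetilde{S}_{\mathcal{U}_k}(\cdot,z)$ and $\widetilde{S}_{\mathcal{U}_k}(y,\cdot)$ are $H^1_{loc}$ weak solutions to the respective Schr\"odinger equations in $\mathcal{W}_k$, they obey quantitative unique continuation. **First I would** fix $z$ (resp.\ $y$) and apply a three-spheres inequality iterated along the chain \eqref{catena dominii}, crossing each flat interface $\Sigma_m$ of size $r_0$; each crossing costs one application with a fixed geometric factor, producing after $N_1$ steps the doubly-exponential exponent $r^2\beta^{2N_1}$ with $\beta=\frac{\ln(8/7)}{\ln 4}$ dictated by the ratio of the three radii used at each step.

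**Next I would** handle the two variables jointly. Because $\widetilde{S}_{\mathcal{U}_k}(y,z)$ is a solution in $y$ and separately in $z$, I would first propagate smallness in the $z$-variable from $\widehat{D}_0$ down to the neighborhood of $P_{k+1}$, obtaining a bound for $\widetilde{S}_{\mathcal{U}_k}(y,\cdot)$ with $y$ still ranging in $\widehat{D}_0$; then repeat the propagation in the $y$-variable. The normalization factor $E+\varepsilon_0$ and the interpolation exponent $\frac{\varepsilon_0}{E+\varepsilon_0}$ arise in the standard way: the a priori $L^\infty$ bound \eqref{apriori q} together with the growth estimate $|G_i(\cdot,y)|\le C|x-y|^{2-n}$ from Proposition \ref{wellposedness} controls $\widetilde{S}_{\mathcal{U}_k}$ globally by $C E$ (yielding the $(1+r^{2\gamma})$ factor after integrating the product of two Green's functions against $\widetilde{q}^{(1)}-\widetilde{q}^{(2)}$ over $\mathcal{U}_k$, with $\gamma=2-\frac n2$), while \eqref{estim0} controls it by $r_0\varepsilon_0$ on the starting set; the three-spheres inequality interpolates multiplicatively between these, giving \eqref{estim1}.

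**For the derivative estimates** \eqref{estim2}--\eqref{estim3}, I would pass from the value at $y_{k+1}$ to the derivatives by interior gradient estimates (Schauder or $L^2$-based Caccioppoli-type bounds, cf.\ \eqref{gradientG}) applied on a ball of radius comparable to $r=\mathrm{dist}(y_{k+1},\Sigma_{k+1})$: each differentiation in $y_j$ or $z_i$ loses a factor $r^{-1}$, which accounts for the $r^{-2}$ in \eqref{estim2} (one $y$- and one $z$-derivative) and $r^{-4}$ in \eqref{estim3} (two of each). The elliptic regularity is legitimate because $y_{k+1}$ sits at distance of order $r$ from the interface inside a region where the coefficients are smooth and the three-spheres bound already holds on the surrounding ball.

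**The hard part will be** making the crossing of the interfaces $\Sigma_m$ quantitatively uniform and keeping exact track of how the constants accumulate, so that the final exponent is precisely $r^2\beta^{2N_1}$ with $N_1$ depending only on the a priori data (in particular on the maximal chain length $K\le N$ and the geometric constants $r_0,L$). The delicate point is that each three-spheres step must be set up on a fixed geometric configuration of balls adapted to the flat portion of the interface — this is where the flatness of size $r_0$ guaranteed by assumption $3(c)$ and the explicit radius ratio $8/7$ enter — and one must verify that the singularities of $\widetilde{G}_1$ and $\widetilde{G}_2$ never enter the balls on which the interpolation is performed, so that $\widetilde{S}_{\mathcal{U}_k}$ stays a genuine $H^1$ solution throughout the chain.
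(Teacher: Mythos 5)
Your proposal matches the paper's proof in all essentials: the a priori decay bound $|\widetilde{S}_{\mathcal{U}_k}(y,z)|\le CE\,(\mathrm{dist}(y,\mathcal{U}_k)\,\mathrm{dist}(z,\mathcal{U}_k))^{2-\frac n2}$ from the Green's function asymptotics, a propagation-of-smallness lemma (iterated three-spheres along the chain, as in Proposition 3.9 of \cite{Beretta2012}) applied once in each of the two variables — which is exactly what squares the exponent to $r^2\beta^{2N_1}$ — and interior Schauder estimates near $y_{k+1}$ costing $r^{-2}$ per pair of derivatives, the paper phrasing this last step by viewing $\widetilde{S}_{\mathcal{U}_k}$ as a solution of $(\Delta_y+\Delta_z)\widetilde{S}+(q_1(y)+q_2(z))\widetilde{S}=0$ in $D_k\times D_k$. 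No gaps; this is essentially the same argument.
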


Note that $\gamma<0$ only when $n>4$, hence the term $r^{2\gamma}$ becomes irrelevant if $n=3,4$.


\subsection{Lipschitz stability for piecewise linear potentials}\label{LSPLP}

\begin{proof}[\textit{Proof of theorem \ref{teorema principale}}]
Let $D_K$ be the subdomain of $\Omega$ satisfying \eqref{DK max} and let $D_1,\dots D_K$ be the chain of domains satisfying assumption $3(c)$. For any $k=1,\dots , K$ we shall denote by $D_{T}f$ and $\partial_\nu f$ the $n-1$ dimensional vector of the tangential partial derivatives of a function $f$ on $\Sigma_k$ and the normal partial derivative of $f$ on $\Sigma_k$ respectively. We denote by $C_i$ the local Cauchy data, for $i=1,2$. Let us also simplify our notation by replacing $C_{q^{(i)}}^{\Gamma}$ with  We shall also denote

\[\varepsilon_0=d(C_1,C_2),\qquad \delta_l=||\tilde{q}^{(1)}-\tilde{q}^{(2)}||_{L^{\infty}(\mathcal{W}_l)}\]

and introduce for any number $b>0$, the concave non decreasing function $\omega_{b}(t)$, defined on $(0,+\infty)$,

\begin{equation}\label{definition omega}
\omega_{b}(t)=\left\{ \begin{array}{ll} 2^{b}e^{-2}|\log t|^{-b},
&\quad
t\in (0,e^{-2}),\\
e^{-2}, &\quad t\in[e^{-2},+\infty).
\end{array} \right.
\end{equation}

We recall (see (4.34) and (4.35) in \cite{A-V}) that for any $\beta\in(0,1)$ we have that

\begin{eqnarray}
(0,+\infty)\ni t \rightarrow \ t\omega_{b}\left(\frac{1}{t}\right)\ \ \mbox{is a nondecreasing function}
\end{eqnarray}
and
\begin{eqnarray}
\omega_{b}\left(\frac{t}{\beta}\right)\le |\log e\beta^{-1/2}|^b\omega_{b}(t)\ , \ \ \omega_{b}(t^{\beta})\le \left(\frac{1}{\beta} \right)^b\omega_{b}(t)\ .
\end{eqnarray}

Furthermore, we set $\omega_{\alpha}^{(0)}(t)=t^{\alpha}$ with $0<\alpha<1$
and we shall denote the iterated compositions
\begin{eqnarray}
\omega_{b}^{(1)}=\omega_b\ ,\qquad \omega_{b}^{(j)}=\omega_{b}\circ \omega_{b}^{(j-1)},\ \ j=2,3, \dots.
\end{eqnarray}

We begin by noticing that for each $l=1,2,\dots$ $||\widetilde{q}^{(1)}_l - \widetilde{q}^{(2)}_l ||_{L^{\infty}(D_l)}$ can be evaluated in terms of the quantities

\begin{eqnarray}
& & ||\widetilde{q}^{(1)}_l - \widetilde{q}^{(2)}_l ||_{L^{\infty}(\Sigma_l\cap B_{\frac{r_0}{4}}(P_l))},\label{gamma interfaccia}\\
& &\left| \partial_{\nu}(\widetilde{q}^{(1)}_l - \widetilde{q}^{(2)}_l)(P_l)\right|,\label{gamma normale interfaccia}
\end{eqnarray}

where $r_0>0$ is the constant introduced in subsection \ref{subsec notation and definitions}. In fact, let us denote

\begin{equation}\label{gamma linear on Dl}
\alpha_l+\beta_l\cdot x = (\widetilde{q}^{(1)}_l -\widetilde{q}^{(2)}_l)(x),\qquad x\in D_l
\end{equation}

and choose $\{e_j\}_{j=1,\dots, n-1}$ orthonormal vectors starting at $P_l$ and generating the hyperplane containing the flat part of $\Sigma_l$.
By computing $\widetilde{q}^{(1)}_l -\widetilde{q}^{(2)}_l$ on the points $P_l$, $P_l+\frac{r_0}{5}e_j$, $j=1,\dots, n-1$ and taking their differences we obtain

\begin{equation}
|\alpha_l+\beta_l\cdot P_l|+\sum_{j=1}^{n-1}|\beta_l\cdot e_j| \leq  C||\widetilde{q}^{(1)}_l -\widetilde{q}^{(2)}_l||_{L^{\infty}(\Sigma_l\cap B_{\frac{r_0}{4}}(P_l))}.
\end{equation}

Next we notice that

\begin{equation}
|\beta_l\cdot\nu|=\left|\partial_{\nu}(q^{(1)}_l-q^{(2)}_l)(P_l)\right|.
\end{equation}

Hence each of the components of $\beta_l$ can be estimated and eventually also $|\alpha_l|$. In conclusion

\begin{equation}
|\alpha_l|+|\beta_l|\leq C\left(||\widetilde{q}^{(1)}_l -\widetilde{q}^{(2)}_l||_{L^{\infty}(\Sigma_l\cap B_{\frac{r_0}{4}}(P_l))} + \left|\partial_{\nu}(q^{(1)}_l-q^{(2)}_l)(P_l)\right|\right).
\end{equation}

Hence our task will be to estimate the quantities

\[||\widetilde{q}^{(1)}_l -\widetilde{q}^{(2)}_l||_{L^{\infty}(\Sigma_l\cap B_{\frac{r_0}{4}}(P_l))}\quad\textnormal{and}\quad\left|\partial_{\nu}(q^{(1)}_l-q^{(2)}_l)(P_l)\right|\]

iteratively with respect to $l$ (see for example \cite{A-dH-G-S}). When $l=1$ this corresponds to a stability estimate at the boundary for the potential $q$ and its normal derivative of the following type

\begin{equation}\label{estimate appendix}
||\widetilde{q}^{(1)}_1 -\widetilde{q}^{(2)}_1||_{L^{\infty}(\Sigma_l\cap B_{\frac{r_0}{4}}(P_1))}+\left|\partial_{\nu}(q^{(1)}_1-q^{(2)}_l)(P_1)\right|\leq  C(\varepsilon_0+E)\left(\frac{\varepsilon_0}{\varepsilon_0+E}\right)^{\eta_1},
\end{equation}

for some constant $\eta_1$, $0<\eta_1<1$, resulting in

\[\delta_1 \leq C(\varepsilon_0+E)\left(\frac{\varepsilon_0}{\varepsilon_0+E}\right)^{\eta_1},\quad 0<\eta_1<1.\]

The bound \eqref{estimate appendix} is proven in subsection \ref{stability at the boundary}. We proceed to estimate $\delta_2$ by proving

\begin{equation}\label{estimate on D2}
||\widetilde{q}^{(1)}_2 -\widetilde{q}^{(2)}_2||_{L^{\infty}(\Sigma_l\cap B_{\frac{r_0}{4}}(P_2))}+\left|\partial_{\nu}(q^{(1)}_2-q^{(2)}_2)(P_2)\right|\leq  C(\varepsilon_0+E)\omega_{\eta_2}\left(\frac{\varepsilon_0}{\varepsilon_0+E}\right),
\end{equation}

for some constant $\eta_2$, $0<\eta_2<1$, where $\omega_b$ is the function defined in \eqref{definition omega}. We recall that for every $y,z\in D_0$ we have

\begin{eqnarray}\label{Alessandrini 1}
& &\int_{\partial\Omega}\left(\widetilde{G}_1(x,y)\partial_{\nu}\widetilde{G}_2(x,z)-\widetilde{G}_2(x,z)\partial_{\nu}\widetilde{G}_1(x,y)\right)\:dS(x)\\
& & = \widetilde{S}_{\mathcal{U}_{1}}(y,z)\:dx + \int_{\mathcal{W}_1}(\widetilde{q}^{(1)}-\widetilde{q}^{(2)})(x)\widetilde{G}_1(x,y)\widetilde{G}_2(x,z)\:dx,\nonumber
\end{eqnarray}

and for every $i=1,\dots n$

\begin{eqnarray}\label{Alessandrini 2}
& &\int_{\partial\Omega}\left(\partial_{y_i}\widetilde{G}_1(x,y)\partial_{\nu}\partial_{z_i}\widetilde{G}_2(x,z)-\partial_{z_i}\widetilde{G}_2(x,z)\partial_{\nu}\partial_{y_i}\widetilde{G}_1(x,y)\right)\:dS(x)\\
& & = \partial_{y_i}\partial_{z_i}\widetilde{S}_{\mathcal{U}_{1}}(y,z) + \int_{\mathcal{W}_1}(\widetilde{q}^{(1)}-\widetilde{q}^{(2)})(x)\partial_{y_i}\widetilde{G}_1(x,y)\partial_{z_i}\widetilde{G}_2(x,z)\:dx,\nonumber
\end{eqnarray}

where $dS(x)$ denotes surface integration with respect to the variable $x$. Here an argument of propagation of smallness allows to get estimates on $\Sigma_2$. In order to estimate $ \widetilde{q}^{(1)}-\widetilde{q}^{(2)}$ we can repeat the argument already used in \cite{Beretta2012} to prove \eqref{Alessandrini 2} and obtain (omitting the details)

\begin{equation}\label{lipschitz stability gamma boundary 1}
|| \widetilde{q}_2^{(1)}-\widetilde{q}_2^{(2)}||_{L^{\infty}\left(\Sigma_2\cap B_{\frac{r_0}{4}}(P_2)\right)}\leq C(\varepsilon_0+E)\left|\ln\left(\frac{\varepsilon_0}{\varepsilon_0+E}\right)\right|^{-\frac{1}{4}},
\end{equation}

where $r_0>0$ is the constant introduced in subsection \ref{subsec notation and definitions}. In comparison to \cite{Beretta2012}, we have the additional task to estimate

\[\left|\partial_{\nu}(q^{(1)}_2-q^{(2)}_2)(P_2)\right|.\]

In this case we detail the procedure. Let us consider for any $i,j=1,\dots n$

\begin{eqnarray}\label{Alessandrini 3}
& &\int_{\partial\Omega} \!\!\!\Big(\partial^{2}_{y_i y_j}\widetilde{G}_1(x,y)\partial_{\nu}\partial^{2}_{z_i z_j}\widetilde{G}_2(x,z)-\partial^{2}_{z_i z_j}\widetilde{G}_2(x,z)\partial_{\nu}\partial^{2}_{y_i y_j}\widetilde{G}_1(x,y)\Big)\:dS(x)\\
& &= \partial^{2}_{y_i y_j}\partial^{2}_{z_i z_j}\widetilde{S}_{\mathcal{U}_{1}}(y,z) \nonumber\\
& &+\int_{\mathcal{W}_1}(\widetilde{q}^{(1)}-\widetilde{q}^{(2)})(x)\partial^{2}_{y_i y_j}\widetilde{G}_1(x,y)\cdot\partial^{2}_{z_i z_j}\widetilde{G}_2(x,z)\:dx.\nonumber
\end{eqnarray}

From \eqref{Alessandrini 1} we obtain

\begin{eqnarray}
|\widetilde{S}_{\mathcal{U}_1}(y,z)| &\leq & \varepsilon_0
\left(\left|\left|\widetilde{G}_1(\cdot,y)\right|\right|_{H^{\frac{1}{2}}_{00}(\Sigma)}+\left|\left|\partial_{\nu}\widetilde{G}_2(\cdot,z)\right|\right|_{H^{-\frac{1}{2}}(\partial\Omega)\big\vert_{\Sigma}}\right)\nonumber\\
& \times &\left(\left|\left|\widetilde{G}_2(\cdot,z)\right|\right|_{H^{\frac{1}{2}}_{00}(\Sigma)}+\left|\left|\partial_{\nu}\widetilde{G}_1(\cdot,y)\right|\right|_{H^{-\frac{1}{2}}(\partial\Omega)\big\vert_{\Sigma}}\right)\nonumber\\
& + & \delta_1 \left|\left|\widetilde{G}_1(\cdot,y)\right|\right|_{L^{2}(\mathcal{W}_1)}\left|\left|\widetilde{G}_2(\cdot,z)\right|\right|_{L^{2}(\mathcal{W}_1)}\nonumber\\
&\leq & C \left(\varepsilon_0+\delta_1\right) r_0^{2-n},\qquad\textnormal{for\:every}\:y,z\in\widehat{D}_0.
\end{eqnarray}

Let $\rho_0=\frac{r_0}{\overline{C}}$, where $\overline{C}$ is the constant introduced in proposition \ref{ordine2}, let $r\in(0,2r_1)$ (where $r_1$ was introduced in proposition \ref{proposizione unique continuation finale}) and denote

\[y_2=P_2+r\nu ,\]

then for every $i,j=1,\dots ,n$

\begin{equation}\label{S=I1+I2}
\partial^{2}_{y_i y_j}\partial^{2}_{z_i z_j}\widetilde{S}_{\mathcal{U}_1}(y_2,y_2)=I_1^{ij}(y_2)+I_2^{ij}(y_2),
\end{equation}

where

\[I_1^{ij}(y_2)=\int_{B_{\rho_0}(P_2)\cap D_2}(q^{(1)}-q^{(2)})(\cdot)\partial^{2}_{y_i y_j}\widetilde{G}_1(\cdot,y_2)
\partial^{2}_{z_i z_j}\widetilde{G}_2(\cdot,y_2),\]

\[I_2^{ij}(y_2)=\int_{\Omega\setminus (B_{\rho_0}(P_2)\cap
D_2)}(q^{(1)}-q^{(2)})(\cdot)\partial^{2}_{y_i y_j}\widetilde{G}_1(\cdot,y_2)
\partial^{2}_{z_i z_j}\widetilde{G}_2(\cdot,y_2).\]

If for $k=1,2$ we denote by $|I_k(y_2)|$ the Euclidean norm of matrix $I_k(y_2)=\{I_k^{ij}(y_2)\}_{i,j=1,\dots , n}$, we have

\begin{equation}\label{stima I2}
|I_2(y_2)|\leq CE\rho_0^{-n},
\end{equation}

where $C$ depends on $\lambda$ and $n$ only (see \cite{A-dH-G-S}) and

\begin{eqnarray*}
 & &|I_1(y_2)|\nonumber\\
 & & \geq \frac{1}{n}\sum_{i,j=1}^{n}\Bigg\{\left|\int_{B_{\rho_0}(P_2)\cap D_2}(\partial_{\nu}(q^{(1)}_2-q^{(2)}_2)(P_2))(x-P_2)_n\partial^{2}_{y_i y_j}\widetilde{G}_1(x,y_2)
\partial^{2}_{z_i z_j}\widetilde{G}_2(x,y_2)\:dx\:\right|\nonumber\\
& &-\int_{B_{\rho_0}(P_2)\cap D_2}|(D_T(q^{(1)}_2-q^{(2)}_2)(P_2))\cdot (x-P_2)'||\partial^{2}_{y_i y_j}\tilde{G}_1(x,y_2)|\:|
\partial^{2}_{z_i z_j}\widetilde{G}_2(x,y_2)\:dx\:|\nonumber\\
& &-\int_{B_{\rho_0}(P_2)\cap D_2}|(q^{(1)}_2-q^{(2)}_2)(P_2)||\partial^{2}_{y_i y_j}\widetilde{G}_1(x,y_2)|\:|
\partial^{2}_{z_i z_j}\widetilde{G}_2(x,y_2)\:dx\:|\Bigg\}
\end{eqnarray*}

and, noticing that up to a transformation of coordinates we can assume that $P_2$ coincides with the origin $O$ of the coordinates system and by theorem \ref{ordine2}, this leads to

\begin{eqnarray}\label{stima S}
|I_1(y_2)|
& &\geq C\bigg\{
|\partial_{\nu}(q^{(1)}_2-q^{(2)}_2) (O)|\int_{B_{\rho_0}(O)\cap
D_2}|\nabla^{2}_y\Gamma(x,y_2)|^2 |x_n|\:dx\nonumber\\
& &- E\int_{B_{\rho_0}(O)\cap
D_2}|\nabla^{2}_y\Gamma(x,y_2)||x-y_2|^{2-n}|x_n|\:dx\nonumber\\
& &-E\int_{B_{\rho_0}(O)\cap
D_2}|x-y_2|^{4-2n}|x_n|\:dx\bigg\}\nonumber\\
& &-\int_{B_{\rho_0}(O)\cap D_2}|D_T(q^{(1)}_2-q^{(2)}_2)|\:|x'|\:|\nabla^{2}_y\widetilde{G}_1(x,y_2)|\:|
\nabla^{2}_z\widetilde{G}_2(x,y_2)|\:dx\nonumber\\
& & -\int_{B_{\rho_0}(O)\cap D_2}|(q^{(1)}_2-q^{(2)}_2)(O)|\:|\nabla^{2}_y\widetilde{G}_1(x,y_2)|\:|
\nabla^{2}_z\widetilde{G}_2(x,y_2)|\:dx.
\end{eqnarray}

Therefore, by combining \eqref{stima S} together with \eqref{S=I1+I2} and \eqref{stima
I2}, we obtain

\begin{eqnarray*}
|I_1(y_2)| &\geq & C\bigg\{
|\partial_{\nu}(q^{(1)}_2-q^{(2)}_2)(O)|\int_{B_{\rho_0}(O)\cap
D_2}|x-y_2|^{-2n}\:|x_n|\:dx\noindent\\
&-&2E\int_{B_{\rho_0}(O)\cap
D_2}|x-y_2|^{3-2n}\:dx\noindent\\
&-&(\varepsilon_0+E)\left|\ln\left(\frac{\varepsilon_0}{\varepsilon_0+E}\right)\right|^{-\frac{1}{4}}\int_{B_{\rho_0}(O)\cap D_2}\:|x-y_2|^{1-2n}\:dx\nonumber\\
&-&(\varepsilon_0+E)\left|\ln\left(\frac{\varepsilon_0}{\varepsilon_0+E}\right)\right|^{-\frac{1}{4}}\int_{B_{\rho_0}(O)\cap D_2}\:|x-y_2|^{-2n}\:dx\bigg\},
\end{eqnarray*}

which leads to

\begin{eqnarray}
|\partial_{\nu}(q^{(1)}_2-q^{(2)}_2)|r^{1-n} &\le & |I_1(y_2)| + C\bigg((\varepsilon_0+E)\left|\ln\left(\frac{\varepsilon_0}{\varepsilon_0+E}\right)\right|^{-\frac{1}{4}} r^{-n}\nonumber\\
& + & 2Er^{3-n}\bigg),
\end{eqnarray}

where

\begin{equation}\label{I1}
|I_1(y_2)|\le |\partial^2_{y_n}\partial^2_{z_n}\widetilde{S}_{\mathcal{U}_{1}}(y_2,y_2)| + C E \rho_0^{-n}.
\end{equation}

Thus by combining the last two inequalities we get

\begin{eqnarray}
|\partial_{\nu}(q^{(1)}_2-q^{(2)}_2)|r^{1-n} &\le & |\partial^2_{y_n}\partial^2_{z_n}\tilde{S}_{\mathcal{U}_{1}}(y_2,y_2)|+ C\bigg\{E \left(\rho_0^{-n}+ r^{3-n}\right)\nonumber\\
&+& (\varepsilon_0+E)\left|\ln\left(\frac{\varepsilon_0}{E+\varepsilon_0+\delta_1}\right)\right|^{-\frac{1}{4}}r^{-n}\bigg\}
\end{eqnarray}

and by recalling that by proposition \ref{proposizione unique continuation finale} we have

\[
\left|\partial^{2}_{y_j}\partial^{2}_{z_i}\tilde{S}_{\mathcal{U}_1}\left(y_{2},y_{2}\right)\right|
\leq C\left(\frac{\varepsilon_0 +\delta_1}{E+\varepsilon_0 +\delta_1}\right)^{r^2\beta^{2N_1}}(\varepsilon_0+E)\left(1+r^{4-n}\right)r^{-4},
\]

where $\beta, N_1$ are the constants introduced in proposition \ref{proposizione unique continuation finale}, we obtain

\begin{eqnarray}\label{stima pre minimizzazione}
|\partial_{\nu}(q^{(1)}_2-q^{(2)}_2)| &\le & C \bigg\{\left(\frac{\varepsilon_0+\delta_1}{E+\varepsilon_0+\delta_1}\right)^{r^2\beta^{2N_1}}(\varepsilon_0+E+\delta_1)r^{p_n}+E r^2\nonumber\\
&+ & (\varepsilon_0+E)\left|\ln\left(\frac{\varepsilon_0}{\varepsilon_0+E}\right)\right|^{-\frac{1}{4}}r^{-1}\bigg\},
\end{eqnarray}

for any $r$, $0<r<2r_1$, where $r_1$ is as above and $p_n= \left\{\begin{array}{cc} -1 & n\geq 4\nonumber\\ n-5 & n=3\end{array}\right.$. By minimizing \eqref{stima pre minimizzazione} with respect to $r$ we obtain

\begin{equation}\label{q2 normal}
|\partial_{\nu}(q^{(1)}_2-q^{(2)}_2)|\le C (\varepsilon_0+\delta_1+E)\omega_{\eta_2}\left(\frac{\varepsilon_0+\delta_1}{\varepsilon_0+\delta_1+E}\right),\textnormal{for\:some\:}\eta_2,\quad 0<\eta_2<1
\end{equation}

and by combining \eqref{q2 normal} together with

\begin{equation}
\frac{\varepsilon_0 +\delta_1}{\varepsilon_0+\delta_1+E}\leq C \omega_{\eta_1}^{(0)} \left(\frac{\varepsilon_0}{\varepsilon_0 +E}\right)
\end{equation}

and by the properties of the modulus $\omega_b$, we obtain \eqref{estimate on D2}. By proceeding by iteration on $l$ in order to estimate $q^{(1)}_l - q^{(2)}_l$ for $l=1,\dots , K$, we replace  \eqref{Alessandrini 1}, \eqref{Alessandrini 2} and \eqref{Alessandrini 3} by

\begin{eqnarray}\label{Alessandrini 4}
& &\int_{\partial\Omega}\left(\widetilde{G}_1(x,y)\partial_{\nu}\widetilde{G}_2(x,z)-\widetilde{G}_2(x,z)\partial_{\nu}\widetilde{G}_1(x,y)\right)\:dS(x)\\
& &\widetilde{S}_{\mathcal{U}_{l-1}}(y,z)+\int_{\mathcal{W}_{l-1}}(\widetilde{q}^{(1)}-\widetilde{q}^{(2)})(x)\widetilde{G}_1(x,y)\widetilde{G}_2(x,z)\:dx,\nonumber
\end{eqnarray}

\begin{eqnarray}\label{Alessandrini 5}
& &\int_{\partial\Omega}\left(\partial_{y_i}\widetilde{G}_1(x,y)\partial_{\nu}\partial_{z_i}\widetilde{G}_2(x,z)-\partial_{z_i}\widetilde{G}_2(x,z)\partial_{\nu}\partial_{y_i}\widetilde{G}_1(x,y)\right)\:dS(x)\\
& &\partial_{y_i}\partial_{z_i}\widetilde{S}_{\mathcal{U}_{l-1}}(y,z)+\int_{\mathcal{W}_{l-1}}(\widetilde{q}^{(1)}-\widetilde{q}^{(2)})(x)\partial_{y_i}\widetilde{G}_1(x,y)\partial_{z_i}\widetilde{G}_2(x,z)\:dx\nonumber
\end{eqnarray}

for any $i=1,\dots ,n$ and

\begin{eqnarray}\label{Alessandrini 6}
& &\hspace{0.5cm}\int_{\partial\Omega}\left(\partial^{2}_{y_i y_j}\widetilde{G}_1(x,y)\partial_{\nu}\partial^{2}_{z_i z_j}\widetilde{G}_2(x,z)-\partial^{2}_{z_i z_j}\widetilde{G}_2(x,z)\partial_{\nu}\partial^{2}_{y_i y_j}\widetilde{G}_1(x,y)\right)\:dS(x)\\
& & =\partial^{2}_{y_i y_j}\partial^{2}_{z_i z_j}\widetilde{S}_{\mathcal{U}_{l-1}}(y,z)+\int_{\mathcal{W}_{l-1}}(\widetilde{q}^{(1)}-\widetilde{q}^{(2)})(x)\partial^{2}_{y_i y_j}\widetilde{G}_1(x,y)\cdot\partial^{2}_{z_i z_j}\widetilde{G}_2(x,z)\:dx\nonumber
\end{eqnarray}

for any $i,j=1,\dots , n$ respectively. Note that \eqref{Alessandrini 4} leads to

\begin{equation}
|\widetilde{S}_{\mathcal{U}_{l1}}(y,z)| \leq  C (\varepsilon_0+\delta_{l-1}) r_0^{2-n},\qquad
\textnormal{for\:every}\:y,z\in \widehat{D}_0,
\end{equation}

where $C$ depends on $L$, $\lambda$, $n$. By repeating the same argument applied for the special case $l=2$ we obtain

\begin{equation}\label{iteration 1}
|| \widetilde{q}^{(1)}_l-\widetilde{q}^{(2)}_l ||_{L^{\infty}(\Sigma_l\cap B_{\frac{r_0}{4}}(P_l))} +\left|\partial_{\nu}(q^{(1)}-q^{(2)})(P_l)\right| \leq  C(\varepsilon_0+E)\omega_{\eta_l}\left(\frac{\varepsilon_0}{\varepsilon_0+E}\right),
\end{equation}

for some $\eta_l$, $0<\eta_l<1$ and observing that

\[\delta_l\leq \delta_{l-1}+||q^{(1)}_l - q^{(2)}_l  ||_{L^{\infty}(D_l)},\]

we obtain for every $l=2,3,\dots$

\[\delta_{l}\leq \delta_{l-1}+C(\varepsilon_0+\delta_{l-1}+E)\omega_{\eta_l}\left(\frac{\varepsilon_0+\delta_{l-1}}{\varepsilon_0+\delta_{l-1}+E}\right),
\]

hence trivially

\begin{equation}\label{iteration 2}
\frac{\varepsilon_0+\delta_l}{\varepsilon_0+\delta_l+E}\leq C \omega_{\eta_l}\left(\frac{\varepsilon_0+\delta_{l-1}}{\varepsilon_0+\delta_{l-1}+E}\right).
\end{equation}

Using the properties of the logarithmic moduli $\omega_b$, \eqref{iteration 1} and the induction step \eqref{iteration 2} we arrive at

\begin{equation*}
||q^{(1)}-q^{(2)}||_{L^{\infty}(\Omega)}\leq C(\varepsilon_0 + E)\omega_{\eta_K}^{(K-1)}\left(\frac{\varepsilon_0}{\varepsilon_0 + E}\right),
\end{equation*}

therefore

\begin{equation}\label{463}
E\leq C(\varepsilon_0 +E)\omega_{\eta_K}^{(K-1)}\left(\frac{\varepsilon_0}{\varepsilon_0 + E}\right).
\end{equation}

Assuming that $E>\varepsilon_0 e^2$ (if this is not the case then the theorem is proven) we obtain

\[E\leq C \left(\frac{E}{e^2}+E\right)\omega_{\eta_K}^{(K-1)}\left(\frac{\varepsilon_0}{E}\right),\]

which leads to

\[\frac{1}{C}\leq \omega_{\eta_K}^{(K-1)}\left(\frac{\varepsilon_0}{E}\right),\]

therefore

\[E\leq \frac{1}{\omega_{\eta_K}^{(-(K-1))}\left(\frac{1}{C}\right)}\:\varepsilon_0,\]

where here, with a slight abuse of notation, $\omega_{\frac{1}{C}}^{(-(K-1))}$ denotes the inverse function of $\omega_{\eta_K}^{(K-1)}$.

\end{proof}




\section{Proof of technical propositions}\label{PP}

\subsection{Asymptotic estimates}\label{AE}

\begin{proof}[Proof of Proposition \ref{wellposedness}]

Let us consider the following inhomogeneous boundary value problem. Given $f\in L^2(\Omega_0)$, we wish to find $v\in H^1(\Omega_0)$ such that
\begin{equation}\label{Green_third_kind}
\left\{
\begin{array}
{lcl}  \Delta v+qv =f\ ,&&
\mbox{in $\Omega_0$ ,}
\\
 v= 0\ ,   && \mbox{on $\partial\Omega_0\setminus \Sigma_0$ ,}
\\
\partial _{\nu}v +iv =0 \ , && \mbox{on $\Sigma_0$ .}
\end{array}
\right.
\end{equation}
in the weak sense. Let us also consider the adjoint problem (recall that $q$ is real valued).
\begin{equation}\label{Green_third_kind_adjoint}
\left\{
\begin{array}
{lcl}  \Delta w+qw =g\ ,&&
\mbox{in $\Omega_0$ ,}
\\
 w= 0\ ,   && \mbox{on $\partial\Omega_0\setminus \Sigma_0$ ,}
\\
\partial _{\nu}w -iw =0 \ , && \mbox{on $\Sigma_0$ .}
\end{array}
\right.
\end{equation}

It is well-known \cite{G-T} that the Fredholm alternative applies and we have existence for \eqref{Green_third_kind} if and only if we have uniqueness for \eqref{Green_third_kind_adjoint} and viceversa. In fact we can prove uniqueness for either of the two. We consider the homogeneous problem
\begin{equation}\label{Green_third_kind_hom}
\left\{
\begin{array}
{lcl}  \Delta z+qz =0\ ,&&
\mbox{in $\Omega_0$ ,}
\\
 z= 0\ ,   && \mbox{on $\partial\Omega_0\setminus \Sigma_0$ ,}
\\
\partial _{\nu}z \pm iz =0 \ , && \mbox{on $\Sigma_0$ .}
\end{array}
\right.
\end{equation}
Using $\bar{z}$ as a test function we obtain
\begin{equation}
\int_{\Omega_0}|\nabla z|^2 -\int_{\Omega_0}q|z|^2 \pm i \int_{\Sigma_0} |z|^2 =0\ ,
\end{equation}
which in turn implies $z=0$ on $\Sigma_0$. Consequently also $\partial_{\nu}z=0$ on $\Sigma_0$. From the uniqueness for the Cauchy problem, it follows that $z\equiv 0$ in $\Omega_0$. Hence the solution to \eqref{Green_third_kind} exists and is unique. We now show that \eqref{Green_third_kind} is also well-posed in $H^1(\Omega_0)$.
From the weak formulation of the problem \eqref{Green_third_kind} we have
\begin{equation}\label{1a}
\int_{\Sigma_0}|v|^2= -\mbox{Im}\left(\int_{\Omega_0}f \bar{v}\right)\ ,
\end{equation}

\begin{equation}\label{2a}
\int_{\Omega_0}|\nabla v|^2= -\mbox{Re}\left(\int_{\Omega_0}f\bar{v}\right) + \int_{\Omega_0}q |v|^2\ .
\end{equation}

We define

\begin{eqnarray}
&&\varepsilon^2=\int_{\Sigma_0} |v|^2 + \int_{\Sigma_0}|\partial_{\nu} v|^2\ ,\\
&&\eta^2= \int_{\Omega_0}|f|^2\ ,\\
&&\delta^2=\int_{\Omega_0}|v|^2\ ,\\
&& E^2=\int_{\Omega_0}|\nabla v|^2 \ .
\end{eqnarray}
From \eqref{1a} and the Schwartz inequality it follows that
\begin{eqnarray}\label{1c}
\int_{\Sigma_0}|v|^2\le \eta \delta
\end{eqnarray}
which, combined with the impedance condition and  Poincar\'{e} inequality, implies that
\begin{eqnarray}\label{1d}
\varepsilon^2\le 2 \eta E.
\end{eqnarray}
Moreover, by \eqref{2a} we have that
\begin{eqnarray}\label{1e}
E^2\le \eta \delta + \|q\|_{L^{\infty}(\Omega_0)}\delta^2.
\end{eqnarray}
We claim that there exists a constant $C_1>0$ depending on $n, r_0, L$ and $\|q\|_{L^{\infty}(\Omega_0)}$ such that
\begin{eqnarray}\label{claim1}
E^2\le C_1 \eta^2 \ .
\end{eqnarray}
In order to prove our claim, we distinguish two cases. If $\delta^2\le \eta^2$, then \eqref{claim1} follows from \eqref{1e}. Otherwise, we observe that by well-known estimates for the Cauchy problem (see for instance \cite{A-R-R-V}) it follows that
\begin{eqnarray}
\delta^2\le (E^2 + \varepsilon^2 + \eta^2)\omega\left(\frac{\varepsilon^2 + \eta^2}{E^2 + \varepsilon^2 + \eta^2} \right),
\end{eqnarray}
where $\omega(t)\le C |\log (t)|^{-\mu}$ with $C$, $0<\mu<1$ depending on $n$, $r_0$, $L$ and $\|q\|_{L^{\infty}(\Omega_0)}$.
By \eqref{1d} and \eqref{1e} the above inequality leads to
\begin{eqnarray}\label{CP1}
\delta^2\le (3 \eta\delta + \|q\|_{L^{\infty}(\Omega_0)}\delta^2 +\eta^2)\omega\left(\frac{\varepsilon^2 + \eta^2}{E^2 + \varepsilon^2 + \eta^2} \right) \ .
\end{eqnarray}

From \eqref{CP1} we have that
\begin{eqnarray}
1\le (4 + \|q\|_{L^{\infty}(\Omega_0)}) \omega\left(\frac{\varepsilon^2 + \eta^2}{E^2 + \varepsilon^2 + \eta^2} \right),
\end{eqnarray}
which leads to
\begin{eqnarray}
\omega^{-1}\left(\frac{1}{4 + \|q\|_{L^{\infty}(\Omega_0)}} \right)(E^2 + \varepsilon^2 + \eta^2)\le \varepsilon^2 + \eta^2 .
\end{eqnarray}
Again from \eqref{1d} we have that
\begin{eqnarray}
\omega^{-1}\left(\frac{1}{4 + \|q\|_{L^{\infty}(\Omega_0)}} \right)E^2 \le 2\eta E+ \eta^2 .
\end{eqnarray}

By Schwartz inequality we readily obtain
\begin{eqnarray}
E^2\le C \eta^2
\end{eqnarray}
which, together with \eqref{1d}, gives
\begin{eqnarray}
\|v\|_{H^1(\Omega_0)}\le C \|f\|_{L^2(\Omega_0)}\ .
\end{eqnarray}
We set $J=[\frac{n-1}{2}]$ and we define iteratively the following kernels

\begin{equation}\label{kernels}
\left\{
\begin{array}
{lcl} R_0(x,y)=G_0(x,y)\ ,&&
\\
R_j(x,y)= \int_{{\Omega}_0} G_0(x,z)q(z)R_{j-1}(z,y)dz\ ,   &&
\end{array}
\right.
\end{equation}
for every $j=1\dots,J$. Note that for every $j=1, \dots, J$ we have
\begin{equation}\label{resti}
\left\{
\begin{array}
{lcl} \Delta R_j(\cdot, y)=-q(\cdot)R_{j-1}(\cdot, y)\ ,&&\ \ \mbox{in}\ \ \Omega_0
\\
R_j(\cdot,y)= 0\  \ ,   && \ \ \mbox{on}\ \ \partial \Omega _0\setminus \Sigma_0\\
\partial_{\nu}R_j(\cdot, y)+ i R_j(\cdot, y)=0\ \  && \ \ \mbox{on}\ \ \Sigma_0
\end{array}
\right.
\end{equation}
and also by standard estimate \cite{Mi}
\begin{equation}\label{indicibassi}
|R_j(x,y)|\le C |x-y|^{2j+2-n},\ \ \mbox{for \ every}\ j=0, \dots, J-1.
\end{equation}

Now, if $n$ is even
\begin{eqnarray}\label{even}
|R_J(x,y)|\le C \left (|\log|x-y||+1 \right),
\end{eqnarray}
where if $n$ is odd
\begin{eqnarray}\label{odd}
|R_J(x,y)|\le C \ .
\end{eqnarray}

In either case $\|R_J(\cdot, y) \|_{L^p(\Omega_0)}\le C$  for every $p<\infty$. We let $R_{J+1}(\cdot,y)=v(\cdot)$ the solution to \eqref{Green_third_kind}, when $f(\cdot)=-q(\cdot)R_{J}(\cdot,y)$. Therefore
\begin{eqnarray}
\|R_{J+1}(\cdot,y)\|_{H^1(\Omega_0)}\le C
\end{eqnarray}
and by relatively standard regularity estimates in the interior and at the boundary (see the arguments in Remark \ref{bound})
\begin{eqnarray}\label{intbound}
|R_{J+1}(x,y)|\le C , \  \ \mbox{for any}\ x, y \in \Omega_0,\  \ x\neq y \ .
\end{eqnarray}
If we form
\begin{equation}\label{somma G}
G(x,y)=G_0(x,y) + \sum_{j=1}^{J+1}R_{j}(x,y),
\end{equation}
then we end up with the desired solution to \eqref{Green_third_kind_statement}.

\end{proof}


\begin{proof}[Proof of Proposition \ref{ordine01}]
It is evident that $G_0(x,y)-\Gamma(x-y)$ is harmonic in either variables $x\in \Omega_0, y\in \Omega_0$. Hence standard interior regularity yields
\begin{eqnarray}
|G_0(x,y) - \Gamma(x-y)| + |\nabla_y(G_0(x,y) - \Gamma(x-y))|\le C ,
\end{eqnarray}
for any $x,y\in (\Omega_0)_{r_0}$.
In order to prove \eqref{3a} and \eqref{3b} it then suffices to estimate $R(x,y)= G(x,y)-G_0(x,y)$ and its $y-$derivatives.
Note that a crude estimate of $R$ could be derived from \eqref{indicibassi}, \eqref{even}, \ \eqref{odd}, \eqref{intbound} and \eqref{somma G}. Finer estimates are obtained as follows. By Green's identity we have
\begin{eqnarray}\label{Green}
R(x,y)= \int_{\Omega_0} G(x,z)q(z)G_0(z,y){d}z\ .
\end{eqnarray}
By combining \eqref{Green}
together with \eqref{beh1} and \cite[Chap. 2]{Mi} we obtain

\begin{equation}\label{stima R}
|R(x,y)|\le \left\{
\begin{array}
{lcl} \ {C} \ , &&\mbox{if}\ n=3,\\
\ {C} (|\log|x-y|| +1) \ , &&\mbox{if}\ n=4,\ \\
\ {C} |x-y|^{4-n},
&&\mbox{if}\ n\ge 5.
\end{array}
\right.
\end{equation}

We now estimate $\nabla_y R$.
When $x\neq y$ we can differentiate under the integral sign and obtain
\begin{eqnarray}
|\partial_{y_i}R(x,y)|&=&\left |\int_{\Omega_0} G(x,z)q(z)\partial_{y_i}G_0(z,y){d}z\right| \\
&\le& \int_{\Omega_0\setminus B_{r_0}(x)}|G(x,z)q(z)\partial_{y_i}G_0(z,y)|\mbox{d}z \nonumber\\
&+&C \int_{B_{r_0}(x)}|x-z|^{2-n}|z-y|^{1-n}{d}z ,\nonumber
\end{eqnarray}
hence we have used the pointwise bound on $G$ achieved in \eqref{beh1} and the above stated asymptotics on $\nabla_y G_0$. The first integral is bounded by a constant, the second one can be estimated (see \cite[Chap. 2]{Mi}) by
\begin{eqnarray}
C \left(|\log|x-y| +1 \right)\ \ \mbox{if}\ n=3
\end{eqnarray}
and by
\begin{eqnarray}
C |x-y|^{3-n}\ \ \mbox{if}\ n\ge 4 \ \ .
\end{eqnarray}

\end{proof}


\begin{proof}[Proof of Theorem \ref{ordine2}]

We fix $l\in \{1, \dots, N-1 \}$. 
Furthermore, we observe that up to a transformation of coordinates we can assume that $Q_{l+1}$ coincides with the origin $0$ of the coordinates system. We denote

\[R(x,y)=G(x,y)-\Gamma(x-y),\]

then we have

\begin{equation}\label{resto}
\left\{
\begin{array}
{lcl}  \Delta_x R(x,y)+q(x) R(x,y)=-q(x)\Gamma(x-y)\ ,&&
\mbox{in $\Omega_0$ ,}
\\
 R(x,y)= -\Gamma(x,y)\ ,   && \mbox{on $\partial\Omega_0\setminus \Sigma_0$ ,}
\\
\partial _{{\nu}_x}{R}(x,y)+ i R(x,y)=-\partial_{{\nu}_x}\Gamma(x-y)- i\Gamma(x-y) , && \mbox{on $\Sigma_0$ .}
\end{array}
\right.
\end{equation}

By Green's identity we arrive at
\begin{eqnarray}
&& R(x,y)=\int_{\Sigma_0}(\partial_{{\nu}_z} + i)\Gamma(z-y)G(z,x){d}\sigma(z)\ \\
&& + \int_{\partial\Omega_0\setminus \Sigma_0}\Gamma(z-y)G(z,x)+ \int_{\Omega_0}\Gamma(z-y)q(z)G(z,x){d}z\ .\nonumber
\end{eqnarray}
We denote
\begin{eqnarray}
\widehat{R}(x,y)=\int_{B_{\frac{r_0}{8}}}\Gamma(z-y)q(z)G(z,x){d}z\ .
\end{eqnarray}
With the stated assumptions on $x$ and $y$, it is a straightforward matter to show that
\begin{eqnarray}\label{boundHessian}
|\nabla_y^2(R(x,y)-\widehat{R}(x,y))|\le C \ .
\end{eqnarray}
Let us investigate $\widehat{R}$. We set
\begin{eqnarray}
&& B=B_{\frac{r_0}{8}}\ , B'=B'_{\frac{r_0}{8}} \ ,\\
&& B^+=\{x\in B: x_n>0\},\ B^-=\{x\in B: x_n<0\} \ ,\\
&& q^+=q|_{B^+}, \ q^-=q|_{B^-} , \ [q]= (q^+-q^-)|_{B'}\ .
\end{eqnarray}
We compute
\begin{eqnarray}
\partial_{y_i}\widehat{R}(x,y) & = &- \int_B \partial_{z_i}\Gamma(z-y)q(z)G(z,x)\:dz \\
&=& -\int_{\partial B}\Gamma(z-y)q(z)G(z,x)e_i\cdot \nu\:dS(z) \nonumber\\
&&+ \int_{B'}\Gamma(z'-y)[q(z')]G(z',x)e_i\cdot e_n\:dz'\nonumber\\
&&+\int_{B}\Gamma(z-y)\left(\partial_{z_i}q(z)G(z,x)+q(z)\partial_{z_i}G(z,x) \right)\: dz. \nonumber
\end{eqnarray}

Note that $\partial_{z_i}q$ is well-defined on $B\setminus B'$. By further differentiation

\begin{eqnarray}
& &\partial^2_{y_iy_j}\widehat{R}(x,y)\\
& &= -\int_{\partial B}\partial_{y_j}\Gamma(z-y)q(z)G(z,x)\:dS(z) \nonumber\\
& &+\int_{B'}\partial_{y_j}\Gamma(z'-y)[q(z')]G(z',x)e_i\cdot e_n\:dz'  \nonumber\\
& &+\int_{B}\partial_{y_j}\Gamma(z-y)\left(\partial_{z_i}q(z)G(z,x)+q(z)\partial_{z_i}G(z,x) \right)\:dz. \nonumber
\end{eqnarray}

The first integral on the right hand side of the above equality is readily seen to be bounded. The third one is dominated by

\begin{eqnarray}
\int_{B}\frac{1}{|z-y|^{n-1}|z-x|^{n-1}}{d}z \le C |x-y|^{2-n}
\end{eqnarray}
and, since $|x-y|^2=|x_n+r|^2 + |x'|^2\ge r^2$, we can bound it as follows
\begin{eqnarray}
\int_{B}\frac{1}{|z-y|^2|z-x|^2}{d}z \le C r^{2-n}\ .
\end{eqnarray}
The second integral, the one on $B'$, is nontrivial only when $i=j=n$. Therefore,
\begin{eqnarray}\label{ij}
|\partial^2_{y_iy_j}R(x,y)|\le C|x-y|^{2-n} \ \ \mbox{for all}\ \ (i,j)\neq (n,n)\ .
\end{eqnarray}
Now
\begin{eqnarray}
\partial^2_{y_n}R(x,y)= \Delta_{y}R(x,y) - \Delta'_{y}R(x,y),
\end{eqnarray}
where $\Delta'_{y}R(x,y)= \sum_{i=1}^{n-1}\partial^2_{y_i}R(x,y)$. By the symmetry of $G$, we also have $R(x,y)=R(y,x)$, hence
\begin{eqnarray}
\Delta_{y}R(x,y)+q(y)R(x,y)=-q(y)\Gamma(x-y)\ .
\end{eqnarray}
 Therefore,
\begin{eqnarray}
\partial^2_{y_n}R(x,y)=- q(y)(R(x,y)+\Gamma(x,y)) - \Delta'R(x,y)
\end{eqnarray}
and consequently
\begin{eqnarray}\label{nn}
|\partial^2_{y_n}R(x,y)|\le C |x-y|^{2-n}\ .
\end{eqnarray}
Combining \eqref{ij} and \eqref{nn} together we get the desired bound for the full Hessian of $R$ and the thesis follows.
\end{proof}


\subsection{Propagation of smallness}\label{PS}
\begin{lemma}\label{uc}
Let $v$ be a weak solution to
\begin{equation}
\Delta v + q v=0 \ \ \mbox{in}\ \ \mathcal{W}_k ,
\end{equation}
where $q$ is either equal to $\widetilde{q}^{(1)}$ or equal to $\widetilde{q}^{(2)}$. Assume that for given positive numbers $\epsilon_0, E_0$ and real number $\gamma$, $v$ satisfies
\begin{equation}
|v(x)|\le \epsilon_0\ \ \ \mbox{for every}\ \ x\in\widehat{D}_0
\end{equation}
and
\begin{equation}
|v(x)|\le C(\epsilon_0 + E_0)(1+ \mbox{dist}(x,\mathcal{U}_k))^{\gamma},\ \ \ \mbox{for every}\ \ x\in \mathcal{W}_k ,
\end{equation}

for some positive constant $C$. Then the following inequality holds true for every $0<r<r_1$
\begin{equation}
|v(y_{k+1})|\le C\left(\frac{\epsilon_0}{\epsilon_0 + E_0}\right)^{r\beta^{N_1}}(\epsilon_0 + E_0)(1+r^{\gamma}),
\end{equation}
where $y_{k+1}=P_{k+1}-2r\nu(P_{k+1})$, where $\nu$ is the exterior unit normal to $\partial D_k$ at $P_{k+1}$, $\beta=\frac{\ln(8/7)}{\ln 4}$, $r_1=\frac{r_0}{16}$ and the constants $C, N_1$ depend on the a priori data only.
\end{lemma}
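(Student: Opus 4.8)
The plan is to propagate the smallness of $v$ from the region $\widehat{D}_0$, where it is controlled by $\epsilon_0$, along the chain of subdomains $D_0,D_1,\dots,D_k$ up to the point $y_{k+1}$, which sits at distance $2r$ from the interface $\Sigma_{k+1}$, keeping track at each stage of the loss in the exponent. The two basic tools will be the three-sphere inequality for $H^1_{\mathrm{loc}}$ weak solutions of $\Delta v+qv=0$ with $q\in L^\infty$ (see \cite{A-R-R-V}) together with the global growth bound assumed on $v$. Since $q$ enters only as a bounded zeroth order term, both the three-sphere inequality and its refinement near a flat boundary are insensitive to the sign of $q$, so that no maximum principle needs to be invoked, which is essential here because $q$ is indefinite.

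First I would record the three-sphere inequality in the normalized form: for concentric balls $B_{\rho}(x_0)\subset B_{\frac{7}{2}\rho}(x_0)\subset B_{4\rho}(x_0)\subset\mathcal{W}_k$,
\[
\|v\|_{L^\infty(B_{\frac{7}{2}\rho}(x_0))}\le C\,\|v\|_{L^\infty(B_{\rho}(x_0))}^{\beta}\,\|v\|_{L^\infty(B_{4\rho}(x_0))}^{1-\beta},
\]
with $C$ depending on the a priori data and on $\|q\|_{L^\infty(\mathcal{W}_k)}$ only, the radii being tuned so as to produce exactly the exponent $\beta=\frac{\ln(8/7)}{\ln 4}$ of the statement. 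Next I would build a chain of a bounded number of overlapping balls of radius comparable to $r_0$, whose centres run from a point of $\widehat{D}_0$ to a point $y^\ast\in D_k$ at distance of order $r_1$ from $\Sigma_{k+1}$, placing one ball centred on each flat interface $\Sigma_1,\dots,\Sigma_k$ so that the three-sphere inequality may be applied \emph{across} the interfaces; this is legitimate because $v$ is a genuine weak solution of $\Delta v+qv=0$ throughout $\mathcal{W}_k$, with $q$ merely bounded and possibly discontinuous along the $\Sigma_j$. Assumption $3(c)$ and the Lipschitz character of the $\partial D_j$ guarantee that the number $N_1$ of balls is bounded in terms of the a priori data alone. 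Iterating the three-sphere inequality along the chain and estimating the outermost balls by the global bound $|v|\le C(\epsilon_0+E_0)$, I would reach
\[
|v(y^\ast)|\le C(\epsilon_0+E_0)\Bigl(\tfrac{\epsilon_0}{\epsilon_0+E_0}\Bigr)^{\beta^{N_1}}.
\]

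The hard part will be the final passage from $y^\ast$ to $y_{k+1}$, that is, the propagation of smallness up to the flat interface $\Sigma_{k+1}$ with a rate that is \emph{linear} in the distance $r$. A naive iteration of the three-sphere inequality along balls shrinking towards $\Sigma_{k+1}$ would force the number of steps to grow like $|\log r|$ and would degrade the exponent to a high power of $r$, producing only a trivial bound; the linear rate must instead come from the optimal logarithmic convexity of $v$ near the flat hypersurface $\Sigma_{k+1}$. I would secure it either through the frequency (doubling) function of $v$ based at $P_{k+1}$, whose boundedness in terms of the a priori data forces the smallness measured at distance $r$ to decay like $(\,\cdot\,)^{cr}$, or through a Carleman estimate with weight adapted to the flat portion $\Sigma_{k+1}$, again insensitive to the sign of $q$; note that the even-reflection device used in Remark \ref{bound} is not directly available, since $v$ satisfies no homogeneous condition on $\Sigma_{k+1}$.

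Coupling this with the assumed growth bound, evaluated at $y_{k+1}$ where $\mathrm{dist}(y_{k+1},\mathcal{U}_k)\sim r$ so as to produce the factor $(1+r^\gamma)$, would give
\[
|v(y_{k+1})|\le C(\epsilon_0+E_0)(1+r^\gamma)\Bigl(\tfrac{|v(y^\ast)|}{\epsilon_0+E_0}\Bigr)^{cr},
\]
and inserting the bound on $|v(y^\ast)|$ and absorbing the constant $c$ into $N_1$ and the constants yields the asserted estimate with exponent $r\beta^{N_1}$. The single genuine obstacle, as indicated, is establishing this linear-in-$r$ decay uniformly with respect to the a priori data; the chain argument and the iteration of the three-sphere inequality leading to $y^\ast$ are otherwise routine.
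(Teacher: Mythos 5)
There is a genuine gap, and it sits exactly where you locate it: the passage from the interior point $y^\ast$ to $y_{k+1}$ with an exponent linear in $r$. You state two candidate methods (frequency function, Carleman weight adapted to $\Sigma_{k+1}$) but carry out neither, and both would require substantial work here — in particular, a doubling-index argument with constants depending only on the a priori data is not available off the shelf for $\Delta+q$ with $q$ merely bounded and sign-indefinite, and nothing in your sketch explains how either method would produce the specific rate $\left(\epsilon_0/(\epsilon_0+E_0)\right)^{r\beta^{N_1}}$. So the "single genuine obstacle" you identify is in fact left open.

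Moreover, your reason for abandoning the iterated three-spheres route at this last stage is mistaken. The paper proves Lemma \ref{uc} in one line by invoking Proposition 3.9 of \cite{Beretta2012}, and that proposition is established precisely by the iteration you dismiss: one applies the three spheres inequality along a chain of balls with geometrically decreasing radii contained in a cone with vertex at $P_{k+1}$ inside $D_k$. After $j$ steps the centre lies at distance comparable to $\chi^{j}$ from $P_{k+1}$ and the accumulated exponent is $\beta^{j}$, so choosing $j$ with $\chi^{j}\sim r$ gives an exponent of the form $r^{\log\beta/\log\chi}$ times $\beta^{N_1}$; with the radii tuned as in \cite{Beretta2012} (whence the specific value $\beta=\ln(8/7)/\ln 4$) this is exactly $r\beta^{N_1}$, not "a high power of $r$" and certainly not trivial. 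The only residue of this shrinking-ball argument is that the growth factor comes out as $(1+r^{(1-\tau_r)\gamma})$ with $\tau_r=\ln\bigl(\frac{12r_1-2r}{12r_1-3r}\bigr)/\ln\bigl(\frac{6r_1-r}{2r_1}\bigr)\in(0,1)$, and the paper's entire additional contribution in this proof is the elementary observation that $C_1r^{\gamma}\le r^{(1-\tau_r)\gamma}\le C_2 r^{\gamma}$, which yields the stated form. Your chain-of-balls argument across the interfaces $\Sigma_1,\dots,\Sigma_k$ up to $y^\ast$ is consistent with that scheme; to close the proof you should simply continue the same iteration into the cone at $P_{k+1}$ (or cite \cite{Beretta2012} as the paper does) rather than switch tools.
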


\begin{proof}
By proposition 3.9 in \cite{Beretta2012}, which is based on an iterated use of the three spheres inequality for elliptic equations, we infer that
\begin{equation}
|v(y_{k+1})|\le C\left(\frac{\epsilon_0}{\epsilon_0 + E_0}\right)^{r\beta^{N_1}}(\epsilon_0 + E_0)(1+r^{(1-\tau_r)\gamma}),
\end{equation}
where $\tau_r=\frac{\ln\left(\frac{12r_1 - 2r}{12 r_1 -3r}\right)}{\ln\left(\frac{6r_1 - r}{2r_1}\right)}\in(0,1)$.
By noticing that there exist positive constants $C_1$ and $C_2$ such that
\begin{eqnarray}
C_1 r^{\gamma}\le r^{(1-\tau_r)\gamma}\le C_2 r^{\gamma},
\end{eqnarray}
the thesis follows.
\end{proof}


\begin{proof}{of Theorem \ref{proposizione unique continuation finale}}

We observe that for any  $y,z\in\mathcal{W}_k$ we have
\begin{eqnarray}\label{stimadist}
\  \ |S_{\mathcal{U}_k}(y,z)|\le C \|\widetilde{q}^{(1)}-\widetilde{q}^{(2)}\|_{L^{\infty}(\Omega_0)}(\mbox{dist}(y,\mathcal{U}_k)\mbox{dist}(z,\mathcal{U}_k))^{2-\frac{n}{2}},\ \
\end{eqnarray}
for any\ \ $y,z\in \mathcal{W}_k$. By \eqref{stimadist} and applying twice lemma \ref{uc} first to $v(\cdot)=S_{\mathcal{U}_k}(\cdot,z)$, with $z\in (D_0)_{\frac{r_0}{4}}$ and then to $v(\cdot)=S_{\mathcal{U}_k}(y,\cdot)$, with $y\in  B_{3r_1 - r}(x_{k+1})$, we find that
\begin{equation}\label{primastima}
\left|\widetilde{S}_{\mathcal{U}_k}\left(y,z\right)\right|
\leq C\left(\frac{\varepsilon_0}{E+\varepsilon_0}\right)^{r^2\beta^{2N_1}}(1+r^{2\gamma}),
\end{equation}

for any $y,z \in B_{3r_1 - r}(x_{k+1})$, where $x_{k+1}=P_{k+1}-3r_1\nu(P_{k+1})$, $\nu$ is the exterior unit normal to $\partial{D}_k$ and  $\gamma=2-\frac{n}{2}$\ .




By considering ${\widetilde{S}}_{\mathcal{U}_k}({y},z)$ as a function of $2n$ variables where $(y,z)\in \mathbb{R}^{2n}$, \eqref{primastima} leads to

\begin{equation}\label{secstima}
|{\widetilde{S}}_{\mathcal{U}_k}(y_1,\dots, y_n,z_1,\dots, z_{n})|\leq  C\left(\frac{\varepsilon_0}{E+\varepsilon_0}\right)^{r^2\beta^{2N_1}}(1+r^{2\gamma}),
\end{equation}

for any $x=(y_1,\dots, y_n,z_1,\dots, z_{n}) \in B_{3r_1 - r}(x_{k+1})\times \in B_{3r_1 - r}(x_{k+1})$. Now observing that ${\tilde{S}}_{\mathcal{U}_k}(y_1,\dots, y_n,z_1,\dots, z_{n})$ is a solution in $D_k\times D_k$ of the elliptic equation

\begin{equation}
\left(\Delta_y + \Delta_z\right) \widetilde{S}_{\mathcal{U}_k}(y,z) + \left(q_1(y) + q_2(z)\right) \tilde{S}_{\mathcal{U}_k}(y,z) = 0,
\end{equation}

we have that given $y_{k+1}=P_{k+1}-2r\nu(P_{k+1})$ by Schauder interior estimates

\begin{eqnarray*}
& &\|\partial_{y_i}\partial_{z_j}{\tilde{S}}_{\mathcal{U}_k}(y_1,\dots, y_n,z_1,\dots, z_{n})\|_{L^{\infty}(B_{\frac{r}{2}}(y_{k+1})\times \in B_{\frac{r}{2}}(y_{k+1})}\nonumber\\
& &\le\frac{C}{r^2}\|{\tilde{S}}_{\mathcal{U}_k }(y_1,\dots, y_n,z_1,\dots, z_{n})\|_{L^{\infty}( B_{ r}(y_{k+1})\times \in B_{ r}(y_{k+1})}
\end{eqnarray*}


and

\begin{eqnarray*}
& &\|\partial^{2}_{y_i}\partial^{2}_{z_j}{\tilde{S}}_{\mathcal{U}_k}(y_1,\dots, y_n,z_1,\dots, z_{n})\|_{L^{\infty}(B_{\frac{r}{4}}(y_{k+1})\times \in B_{\frac{r}{4}}(y_{k+1})}\nonumber\\
& &\le\frac{C}{r^2}\|\partial_{y_i}\partial_{z_j}{\tilde{S}}_{\mathcal{U}_k}(y_1,\dots, y_n,z_1,\dots, z_{n})\|_{L^{\infty}( B_{\frac{r}{2}}(y_{k+1})\times \in B_{\frac{r}{2}}(y_{k+1})},
\end{eqnarray*}

for any $i,j=1,\dots, n$, where $C>0$ is a constant depending on the a priori data only.



\end{proof}


\subsection{Stability at the boundary}\label{stability at the boundary}

\begin{proof}[Proof of estimate \eqref{estimate appendix}]



We choose a coordinate system $\{x_1,\dots , x_n\}$ centred at $P_1$ with $x_n$ in the direction of the normal $\nu$ and recall that for every $y,z\in D_0$ we have

\begin{eqnarray}\label{Alessandrini 2 appendix}
& &\int_{\partial\Omega}\!\!\left(\partial_{y_n}\widetilde{G}_1(x,y)\partial_{x_n}\partial_{z_n}\widetilde{G}_2(x,z)-\partial_{z_n}\widetilde{G}_2(x,z)\partial_{x_n}\partial_{y_n}\widetilde{G}_1(x,y)\right)\:dS(x)\\
& & = \int_{\Omega}(\widetilde{q}^{(1)}-\widetilde{q}^{(2)})(x)\partial_{y_n}\widetilde{G}_1(x,y)\partial_{z_n}\widetilde{G}_2(x,z)\:dx = \partial_{y_n}\partial_{z_n}\widetilde{S}_{\mathcal{U}_{0}}(y,z).\nonumber
\end{eqnarray}

and

\begin{eqnarray}\label{Alessandrini 3 appendix}
& &\int_{\partial\Omega}\!\!\left(\partial^{2}_{y_n}\widetilde{G}_1(x,y)\partial_{x_n}\partial^{2}_{z_n}\widetilde{G}_2(x,z)-\partial^{2}_{z_n}\widetilde{G}_2(x,z)\partial_{x_n}\partial^{2}_{y_n}\widetilde{G}_1(x,y)\right)\:dS(x)\\
& & =\int_{\Omega}(\widetilde{q}^{(1)}-\widetilde{q}^{(2)})(x)\partial^{2}_{y_n}\widetilde{G}_1(x,y)\partial^{2}_{z_n}\widetilde{G}_2(x,z)\:dx
= \partial^{2}_{y_n}\partial^{2}_{z_n}\widetilde{S}_{\mathcal{U}_{0}}(y,z).\nonumber
\end{eqnarray}




By combining \eqref{Alessandrini 2 appendix} together with \eqref{norm Cauchy data}, \eqref{Alessandrini stability}, we obtain

\begin{eqnarray}
& &\left|\int_{\partial\Omega}\!\!\left(\partial_{y_n}\widetilde{G}_1(x,y)\partial_{x_n}\partial_{z_n}\widetilde{G}_2(x,z)-\partial_{z_n}\widetilde{G}_2(x,z)\partial_{x_n}\partial_{y_n}\widetilde{G}_1(x,y)\right)\:dS(x)\right|\\
& &\leq  C \varepsilon_0 \left(d(y)d(z)\right)^{-\frac{n}{2}},\qquad
\textnormal{for\:every}\:y,z\in D_0,\nonumber
\end{eqnarray}

where $d(y)$ denotes the distance of $y$ from $\Omega$ and $C$ is a constant that depends on $L$, $\lambda$ and $n$ only. Let $\rho_0=\frac{r_0}{{C}}$, where ${C}$ is the constant introduced in Theorem \ref{ordine01},  let $r\in(0,r_1)$, where $r_1$ has been introduced in Proposition \ref{proposizione unique continuation finale} and denote

\[y_1=P_1+r\nu.\]

We set $y=z=y_1$ and obtain

\begin{eqnarray}
& &\int_{\Omega}(\widetilde{q}^{(1)}-\widetilde{q}^{(2)})(x)\partial_{y_n}\widetilde{G}_1(x,y)\partial_{z_n}\widetilde{G}_2(x,z)\:dx\\
& &=
\int_{B_{\rho_0}(P_1)\cap D_1}(\widetilde{q}^{(1)}-\widetilde{q}^{(2)})(x)\partial_{y_n}\widetilde{G}_1(x,y)\partial_{z_n}\widetilde{G}_2(x,z)\:dx\nonumber\\
& &+ \int_{\Omega\setminus (B_{\rho_0}(P_1)\cap
D_1)}(\widetilde{q}^{(1)}-\widetilde{q}^{(2)})(x)\partial_{y_n}\widetilde{G}_1(x,y)\partial_{z_n}\widetilde{G}_2(x,z)\:dx,\nonumber
\end{eqnarray}

which leads to

\begin{eqnarray}\label{S=I1+I2 inq 1}
\varepsilon_0 r^{-n}& \geq  &\left| \int_{B_{\rho_0}(P_1)\cap D_1}(q^{(1)}_1- q^{(2)}_1)(\cdot)\partial_{y_n}\widetilde{G}_1(x,y_1)\partial_{z_n}\widetilde{G}_2(x,y_1)\right|\nonumber\\
& - &\left|\int_{\Omega\setminus (B_{\rho_0}(P_1)\cap
D_1)}(q^{(1)}-q^{(2)})(\cdot)
\partial_{y_n}\widetilde{G}_1(x,y_1)\partial_{z_n}\widetilde{G}_2(x,y_1)\right|.
\end{eqnarray}

Let $x^{0}\in\overline{\Sigma_1\cap B_{\frac{r_0}{4}(P_1)}}$ such that $\left(q^{(1)}_1 - q^{(2)}_1\right)(x^0) = ||\widetilde{q}^{(1)}-\widetilde{q}^{(2)}||_{L^{\infty}(\Sigma_1\cap B_{\frac{r_0}{4}(P_1)})}$ and recall that $\left(q^{(1)}_1 - q^{(2)}_1\right)(x)=\alpha_1+\beta_1\cdot x$, therefore we obtain


\begin{eqnarray}\label{S=I1+I2 inq 2}
\varepsilon_0 r^{-n}  & \geq & \left| \int_{B_{\rho_0}(P_1)\cap D_1}(q^{(1)}_1-q^{(2)}_1)(x^0)\partial_{y_n}\widetilde{G}_1(x,y_1)\partial_{z_n}\widetilde{G}_2(x,y_1)\right|\nonumber\\
&-& \left|\int_{B_{\rho_0}(P_1)\cap D_1}\beta_1\cdot (x-x^0)\partial_{y_n}\widetilde{G}_1(x,y_1)\partial_{z_n}\widetilde{G}_2(x,y_1)\right|-CE\rho_0^{2-n}
\end{eqnarray}

and then

\begin{eqnarray}\label{S=I1+I2 inq 3}
& & ||\widetilde{q}_1^{(1)}-\widetilde{q}_1^{(2)}||_{L^{\infty}(\Sigma_1\cap B_{\frac{r_0}{4}}(P_1))}\left|\int_{B_{\rho_0}(P_1)\cap D_1} \partial_{y_n}\widetilde{G}_1(x,y_1)\partial_{z_n}\widetilde{G}_2(x,y_1)\:dx\right|\nonumber\\
& &\leq\int_{B_{\rho_0}(P_1)\cap D_1}|\beta_1|\:|x-x^0|\:|\partial_{y_n}\widetilde{G}_1(x,y_1)| |\partial_{z_n}\widetilde{G}_2(x,y_1)|\:dx\nonumber\\
& & + CE\rho_0^{2-n}+\varepsilon_0 r^{-n}.
\end{eqnarray}

For $n=3$, by combining \eqref{S=I1+I2 inq 3} together with \eqref{3b}, we obtain

\begin{eqnarray}\label{S=I1+I2 inq 4 3D}
& & ||\widetilde{q}_1^{(1)}-\widetilde{q}_1^{(2)}||_{L^{\infty}(\Sigma_1\cap B_{\frac{r_0}{4}(P_1)})}\int_{B_{\rho_0}(P_1)\cap D_1} |\nabla\Gamma(x-y_1)|^2\:dx\nonumber\\
 & &\leq C\bigg\{ E\int_{B_{\rho_0}(P_1)\cap D_1}|\nabla\Gamma(x-y_1)|\:\log|x-y_1|\:dx\nonumber\\
 & & +  E\int_{B_{\rho_0}(P_1)\cap D_1}\left(\log|x-y_1|\right)^{2}\:dx\nonumber\\
& &  + E \int_{B_{\rho_0}(P_1)\cap D_1}\:|x-x^{0}||\:x-y_1|^{-4}\:dx + E\rho_0^{-1}+\varepsilon_0 r^{-3}\bigg\},
\end{eqnarray}

which leads to

\begin{eqnarray}\label{S=I1+I2 inq 4 bis 3D}
& & ||\widetilde{q}_1^{(1)}-\widetilde{q}_1^{(2)}||_{L^{\infty}(\Sigma_1\cap B_{\frac{r_0}{4}(P_1)})}\int_{B_{\rho_0}(P_1)\cap D_1} |\:x-y_1|^{-4}\:dx\nonumber\\
 & &\leq C\bigg\{ E\int_{B_{\rho_0}(P_1)\cap D_1}|x-y_1|^{-3}\:dx +
E\int_{B_{\rho_0}(P_1)\cap D_1}|\:x-y_1|^{-2}\:dx \nonumber\\
& &  + E\int_{B_{\rho_0}(P_1)\cap D_1}|x-y_1|^{-3}\:dx  + E\rho_0^{-1}+\varepsilon_0 r^{-3}\bigg\},
\end{eqnarray}

therefore

\begin{eqnarray}
||\widetilde{q}^{(1)}-\widetilde{q}^{(2)}||_{L^{\infty}(\Sigma_1\cap B_{\frac{r_0}{4}(P_1)})} &\leq & C\left\{E r\log\left(\frac{\rho_0}{r}\right) + Er^{2} + E\rho^{-1}_0  r+ \varepsilon_0 r^{-2}.\right\}\nonumber\\
&\leq & C\left\{Er^{\theta}+\varepsilon r^{-2}\right\},
\end{eqnarray}

for some $\theta$, $0<\theta<1$ and by optimizing with respect to $r$

\begin{equation}\label{stima lipschitz gamma 1}
||\widetilde{q}^{(1)}-\widetilde{q}^{(2)}||_{L^{\infty}(\Sigma_1\cap B_{\frac{r_0}{4}(P_1)})}\leq C \varepsilon^{\frac{\theta}{\theta +2}}_0 \left(E+\varepsilon_0\right)^{\frac{2}{\theta+2}}.
\end{equation}

For $n\geq 4$, by combining \eqref{S=I1+I2 inq 3} together \eqref{3b}, we obtain

\begin{eqnarray}\label{S=I1+I2 inq 4 big D}
& & ||\widetilde{q}_1^{(1)}-\widetilde{q}_1^{(2)}||_{L^{\infty}(\Sigma_1\cap B_{\frac{r_0}{4}(P_1)})}\int_{B_{\rho_0}(P_1)\cap D_1} |\nabla\Gamma(x-y_1)|^2\:dx\nonumber\\
 & &\leq C\bigg\{ E\int_{B_{\rho_0}(P_1)\cap D_1}|\nabla\Gamma(x-y_1)|\:|x-y_1|^{3-n}\:dx\nonumber\\
 & & +  E\int_{B_{\rho_0}(P_1)\cap D_1}\:|x-y_1|^{6-2n}\:dx\nonumber\\
& &  + E \int_{B_{\rho_0}(P_1)\cap D_1}\:|x-x^{0}||\:x-y_1|^{2-2n}\:dx + E\rho_0^{2-n}+\varepsilon_0 r^{-n}\bigg\},
\end{eqnarray}

which leads to

\begin{eqnarray}\label{S=I1+I2 inq 4 bis big D}
& & ||\widetilde{q}_1^{(1)}-\widetilde{q}_1^{(2)}||_{L^{\infty}(\Sigma_1\cap B_{\frac{r_0}{4}(P_1)})}\int_{B_{\rho_0}(P_1)\cap D_1} |\:x-y_1|^{2-2n}\:dx\nonumber\\
 & &\leq C\bigg\{ E\int_{B_{\rho_0}(P_1)\cap D_1}|x-y_1|^{4-2n}\:dx +
E\int_{B_{\rho_0}(P_1)\cap D_1}|\:x-y_1|^{6-2n}\:dx \nonumber\\
& &  + E\int_{B_{\rho_0}(P_1)\cap D_1}|x-y_1|^{3-2n}\:dx  + E\rho_0^{2-n}+\varepsilon_0 r^{-n}\bigg\},
\end{eqnarray}

therefore

\begin{eqnarray}
||\widetilde{q}^{(1)}-\widetilde{q}^{(2)}||_{L^{\infty}(\Sigma_1\cap B_{\frac{r_0}{4}(P_1)})} &\leq & C\left\{E r^{2} + Er^{4} + Er + E\rho^{2-n}_0  r+ \varepsilon_0 r^{-2}.\right\}\nonumber\\
&\leq & C\left\{Er+\varepsilon_0 r^{-2}\right\}
\end{eqnarray}

and by optimizing with respect to $r$

\begin{equation}\label{stima lipschitz gamma}
||\widetilde{q}^{(1)}-\widetilde{q}^{(2)}||_{L^{\infty}(\Sigma_1\cap B_{\frac{r_0}{4}(P_1)})}\leq C \varepsilon^{\frac{1}{3}}_0 \left(E+\varepsilon_0\right)^{\frac{2}{3}}.
\end{equation}

We proceed by estimating $\partial_{\nu}\left(\widetilde{q}^{(1)}-\widetilde{q}^{(2)}\right)(P_{1})$.  By combining \eqref{Alessandrini 3 appendix} together with \eqref{norm Cauchy data}, \eqref{Alessandrini stability}, we obtain

\begin{eqnarray}\label{Alessandrini 3 appendix a}
& &\left|\int_{\partial\Omega}\!\!\left(\partial^{2}_{y_n}\widetilde{G}_1(x,y)\partial_{x_n}\partial^{2}_{z_n}\widetilde{G}_2(x,z)-\partial^{2}_{z_n}\widetilde{G}_2(x,z)\partial_{x_n}\partial^{2}_{y_n}\widetilde{G}_1(x,y)\right)\:dS(x)\right|\\
& & \leq  C \varepsilon_0 \left(d(y)d(z)\right)^{-\frac{n}{2}-1},\qquad
\textnormal{for\:every}\:y,z\in D_0,\nonumber
\end{eqnarray}

and setting $y=z=y_1$ in \eqref{Alessandrini 3 appendix a}, we obtain

\begin{eqnarray}
& &\int_{\partial\Omega}\!\!\left(\partial^{2}_{y_n}\widetilde{G}_1(x,y)\partial_{x_n}\partial^{2}_{z_n}\widetilde{G}_2(x,y_1)-\partial^{2}_{z_n}\widetilde{G}_2(x,y_1)\partial_{x_n}\partial^{2}_{y_n}\widetilde{G}_1(x,y_1)\right)\:dS(x)\\
& &=I_1(y_1)+I_2(y_1),\nonumber
\end{eqnarray}

where

\[I_1(y_1)=\int_{B_{\rho_0}(P_1)\cap D_1}(\widetilde{q}^{(1)}-\widetilde{q}^{(2)})(x)\partial^{2}_{y_n}\widetilde{G}_1(x,y_1)\partial^{2}_{z_n}\widetilde{G}_2(x,y_1)\:dx,\]

\[I_2(y_1)=\int_{\Omega\setminus (B_{\rho_0}(P_1)\cap D_1)}(\widetilde{q}^{(1)}-\widetilde{q}^{(2)})(x)\partial^{2}_{y_n}\widetilde{G}_1(x,y_1)\partial^{2}_{z_n}\widetilde{G}_2(x,y_1)\:dx,\]

and

\begin{equation}
|I_2(w)|\leq CE\rho_0^{-n-2}.
\end{equation}

We have

\begin{eqnarray*}
& &|I_1(y_1)|\\
& &\geq \left|\int_{B_{\rho_0}(P_1)\cap D_1}\!\!(\partial_{x_n}(q^{(1)}_1-q^{(2)}_1)(P_1))(x-P_1)_n\partial^2_{y_n}\widetilde{G}_1(x,y_1)
\partial^2_{z_n}\widetilde{G}_2(x,y_1)\right|\:dx\nonumber\\
& &-\int_{B_{\rho_0}(P_1)\cap D_1}\!\!\!\!|(D_T(q^{(1)}_1-q^{(2)}_1)(P_1))\cdot (x-P_1)'||\partial^2_{y_n}\widetilde{G}_1(x,y_1)|\:|
\partial^2_{z_n}\widetilde{G}_2(x,y_1)|\:dx\nonumber\\
& &-\int_{B_{\rho_0}(P_1)\cap D_1}|(q^{(1)}_1-q^{(2)}_1)(P_1)||\partial^2_{y_n}\widetilde{G}_1(x,y_1)|\:|
\partial^2_{z_n}\widetilde{G}_2(x,y_1)|\:dx.
\end{eqnarray*}

Noticing that up to a transformation of coordinates we can assume that $P_1$ coincides with the origin $O$ of the coordinates system and recalling Theorem \ref{ordine2}, this leads to

\begin{eqnarray}
|I_1(y_1)|
& &\geq
|\partial_{x_n}(q^{(1)}_1-q^{(2)}_1)(O)|C\int_{B_{\rho_0}(O)\cap
D_1}|\partial^2_{y_n}\Gamma(x,y_1)|^2\:|x_n|\:dx\nonumber\\
& &- C\bigg\{E\int_{B_{\rho_0}(O)\cap
D_1}|\partial^2_{y_n}\Gamma(x,y_1)|\:|x-y_1|^{2-n}|x_n|\:dx\nonumber\\
& &-E\int_{B_{\rho_0}(O)\cap D_1}|x-y_1|^{4-2n}|x_n|\:dx\bigg\}\nonumber\\
& &-\int_{B_{\rho_0}(O)\cap D_1}\!\!\!\!|(D_T(q^{(1)}_1-q^{(2)}_1)(O))|\:|x'|\:|\partial^2_{y_n}\widetilde{G}_1(x,y_1)|\:|
\partial^2_{z_n}\widetilde{G}_2(x,y_1)|\:dx\nonumber\\
& &-\int_{B_{\rho_0}(O)\cap D_1}|(q^{(1)}_1-q^{(2)}_1)(O)|\:|\partial^2_{y_n}\widetilde{G}_1(x,y_1)|\:|
\partial^2_{z_n}\widetilde{G}_2(x,y_1)|\:dx.\end{eqnarray}

Therefore, by combining \eqref{stima S} together with \eqref{S=I1+I2} and \eqref{stima
I2}, we obtain

\begin{eqnarray*}
|I_1(y_1)| &\geq &
|\partial_{x_n}(q^{(1)}_1-q^{(2)}_1)(O)|C\int_{B_{\rho_0}(P_1)\cap
D_1}|x-y_1|^{1-2n}\:dx\noindent\\
&-&C\bigg\{E\int_{B_{\rho_0}(O)\cap
D_1}|x-y_1|^{3-2n}\:dx\noindent\\
&-&E\int_{B_{\rho_0}(O)\cap
D_1}|x-y_1|^{5-2n}\:dx\nonumber\\
&-&(\varepsilon_0 + E)\left(\frac{\varepsilon_0}{\varepsilon_0 + E}\right)^{\eta_1}\int_{B_{\rho_0}(O)\cap D_1}\:|x-y_1|^{1-2n}\:dx\nonumber\\
&-&(\varepsilon_0 + E)\left(\frac{\varepsilon_0}{\varepsilon_0 + E}\right)^{\eta_1}\int_{B_{\rho_0}(O)\cap D_1}\:|x-y_1|^{-2n}\:dx\bigg\},
\end{eqnarray*}

which implies

\begin{equation}
|\partial_{x_n}(\gamma^{(1)}_1-\gamma^{(2)}_1)(O)|\sigma^{1-n}\le |I_1(y_1)| + C\Big\{E r^{3-n} + (\varepsilon_0 + E)\left(\frac{\varepsilon_0}{\varepsilon_0 + E}\right)^{\eta_1} r^{-n}\Big\},
\end{equation}

and

\begin{eqnarray}\label{l1}
|I_1(y_1)| \le& &\left|\int_{\partial\Omega}\left(\partial^{2}_{y_n}\widetilde{G}_1(x,y_1)\partial_{x_n}\partial^{2}_{z_n}\widetilde{G}_2(x,y_1)-\partial^{2}_{z_n}\widetilde{G}_2(x,y_1)\partial_{x_n}\partial^{2}_{y_n}\widetilde{G}_1(x,y_1)\right)\:dS(x)\right|\nonumber\\
 &&\ \ \ \ \ \ + C E \rho_0^{-n-2}.
\end{eqnarray}

Thus by combining together the last two inequalities we get

\begin{eqnarray}
|\partial_{x_n}(q^{(1)}_1-q^{(2)}_1)(O)| r^{1-n} &\le &  C \Big(\varepsilon_0 r^{-n-2}+ E \rho_0^{-n-2}\nonumber\\
&+&E r^{3-n} + (\varepsilon_0 + E)\left(\frac{\varepsilon_0}{\varepsilon_0 + E}\right)^{\eta_1} r^{-n}\Big),
\end{eqnarray}

therefore

\begin{equation}
|\partial_{x_n}(q^{(1)}_1-q^{(2)}_1)(O)| \le   C \left\{ \varepsilon_0 r^{-3} + E r^{2} + (E+\varepsilon_0)\left(\frac{\varepsilon_0}{E+\varepsilon_0}\right)^{\eta_1}r^{-1}\right\}
\end{equation}

and by optimizing with respect to $r$ we get

\begin{equation}
|\partial_{x_n}(q^{(1)}_1-q^{(2)}_1) (O)| \le   C (E+\varepsilon_0)\left(\frac{\varepsilon_0}{E+\varepsilon_0}\right)^{\frac{2\eta_1}{5}}.
\end{equation}

\end{proof}

\section*{\normalsize{Acknowledgments}}
The research carried out by G. Alessandrini and E. Sincich for the preparation of this paper has been supported by FRA 2016 "Problemi inversi, dalla stabilit\`{a} alla ricostruzione" funded by Universit\`{a} degli Studi di Trieste. E. Sincich has been also supported by  Gruppo Nazionale per l'Analisi Matematica, la Probabilit\`{a} e le loro Applicazioni  (GNAMPA) by the grant " Problemi Inversi per Equazioni Differenziali''.   E. Sincich is grateful for the support and the hospitality of the  Department of Mathematics and Statistics of the University of Limerick, where part of this work has been carried over.  R. Gaburro and E. Sincich acknowledge the support of {{"Programma professori visitatori''}}, Istituto Nazionale di Alta Matematica Francesco Severi (INdAM) during the Fall 2016/17. R. Gaburro wishes to acknolwedge also the support of MACSI, the Mathematics Applications Consortium for Science and Industry (www.macsi.ul.ie), funded by the Science Foundation Ireland Investigator Award 12/IA/1683. M.V de Hoop was partially supported by the Simons Foundation under the MATH $+$ X program, the National Science Foundation under grant DMS-1559587, and by the members of the Geo-Mathematical Group at Rice University.


\end{document}